\title{Simultaneous Perturbation Algorithms for Batch Off-Policy Search} 
\author[1]{Raphael Fonteneau \thanks{raphael.fonteneau@ulg.ac.be}}
\author[2]{Prashanth L A \thanks{prashanth.la@inria.fr}}
\affil[1]{\small Department of Electrical Engineering and Computer Science, University of Li\`{e}ge, Li\`{e}ge, Belgium}
\affil[2]{\small INRIA Lille - Nord Europe, Team SequeL, FRANCE.}
\date{}
\begin{document}

\maketitle
\thispagestyle{empty}
\pagestyle{empty}

\begin{abstract}
 We propose novel policy search algorithms in the context of off-policy, batch mode reinforcement learning (RL) with continuous state and action spaces. Given a batch collection of trajectories, we perform off-line policy evaluation using an algorithm similar to that by  \cite{Fonteneau2010AISTATS}. Using this Monte-Carlo like policy evaluator, we perform policy search in a class of parameterized policies. We propose both first order policy gradient and second order policy Newton algorithms. All our algorithms incorporate simultaneous perturbation estimates for the gradient as well as the Hessian of the cost-to-go vector, since the latter is unknown and only biased estimates are available. We demonstrate their practicality on a simple 1-dimensional continuous state space problem.
\end{abstract} 

\section{Introduction}
\label{section:introduction}
This paper stands within the field of optimal control in the context of  infinite horizon discounted cost Markov decision processes (MDPs) \cite{BertsekasT96}. More specifically, this paper addresses the batch mode setting \cite{Ernst2005,Fonteneau2011Thesis}, where we are given a set of noisy trajectories of a system without access to any model or simulator of that system. More formally, we are given a set of $n$ samples (also called transitions) $\{(x^l, u^l, c^l, y^l)\}_{l=1}^n$, where,  for every $l \in \{1, \ldots, n\}$, the 4-tuple $(x^l, u^l, c^l, y^l)$ denotes the state $x^{l}$, the action $u^{l}$, a (noisy) cost received in $(x^{l}, u^{l})$ and a (noisy) successor state reached when taking action $u^{l}$ in state $x^{l}$. The samples are generated according to some unknown policy and the objective is to develop a (off-policy) control scheme that attempts to find a near-optimal policy using this batch of samples.

For this purpose, we first parameterize the policy and hence the cost-to-go, denoted by $J^\theta(x_0)$. Here $\theta$ is the policy parameter, $x_0$ is a given initial state and $J^\theta(x_0)$ is the expected cumulative discounted sum of costs under a policy governed by $\theta$ (see \eqref{performance_criterion}). Note that the policy parameterization is not constrained to be linear. We develop algorithms that perform descent using estimates of the cost-to-go $J^\theta(x_0)$.  For obtaining these estimates from the batch data, we extend a recent algorithm proposed for finite horizon MDPs \cite{Fonteneau2010AISTATS}, to the infinite horizon, discounted setting. The advantage of this estimator, henceforth referred to as MFMC, is that it is off-policy in nature, computationally tractable and consistent under Lipschitz assumption on the transition dynamics, cost function and policy.  Moreover, it does not require the use of function approximators, but only needs a metric on the state and action spaces.

Being equipped with the MFMC policy evaluator that outputs an estimate of the cost-to-go $J^\theta(x_0)$ for any policy parameter $\theta$, the requirement is for a control scheme that uses these estimated values to update the parameter $\theta$ in the negative descent direction. However, closed form expressions of the gradient/Hessian of the cost-to-go are not available and MFMC estimates possess a non-zero bias. To alleviate this, we employ the well-known simultaneous perturbation principle (cf. \cite{Bhatnagar13SR}) to estimate the gradient and Hessian, respectively, of $J^\theta(x_0)$ using estimates from MFMC and propose two first order and two second order algorithms. Our algorithms are based on two popular simultaneous perturbation methods - Simultaneous Perturbation Stochastic Approximation (SPSA) \cite{spall92multivariate} and Smoothed Functional \cite{katkovnik1972convergence}. 

The first-order algorithms perform gradient descent using either SPSA or SF estimates to update the policy parameter. On the other hand, the second order algorithms incorporate a Newton step by estimating the gradient as well as the Hessian of the cost-to-go $J^\theta(x_0)$ using SPSA or SF. 
We demonstrate the empirical usefulness of our algorithms on a simple 1-dimensional continuous state space problem.


To the best of our knowledge, the algorithms presented in this paper are the first to solve batch, off-policy stochastic control in continuous state and action spaces without using function approximators for evaluating policies. Our approach only requires (i) a (random) set of trajectories, (ii) metrics on the state and action spaces, and (iii) a set of parameterized policies.

\section{Related work}
\label{section:related_work}
 
The work presented in this paper mainly relates to two fields of research: batch mode reinforcement learning and policy gradient methods.

Genesis of batch mode RL may be found in the work of \citep{Bradtke1996}, where the authors use least-squares techniques in the context of temporal difference (TD) learning methods for estimating the return of control policies. This approach has been extended to the problem of optimal control by \citep{Lagoudakis2003}. Algorithms similar to value iteration have also been proposed in the batch mode RL setting and the reader is referred to the works of \citep{Ormoneit2002} (using kernel approximators) or \citep{Ernst2005} (using ensembles of regression trees) and \citep{Riedmiller2005} (using neural networks). 
More recently, new batch mode RL techniques have been proposed by \citep{Fonteneau2013ANOR} and this does not require the use of function approximators for policy evaluation. Our policy evaluator is based on the Monte Carlo-like technique proposed by \citep{Fonteneau2013ANOR}.

Policy gradient methods \citep{Bartlett2001} can be seen as a subclass of direct policy search techniques \citep{Schmidhuber1998,Busoniu2011cross} that aim at  finding a near-optimal policy within a set of parameterized policies.  Actor-critic algorithms are relevant in this context and the reader is referred to works by \citep{konda2003onactor,bhatnagar2009natural,Grondman2012} and the references therein.
The actor-critic algorithms mentioned above work in an approximate dynamic programming setting. In other words, owing to the high-dimensional state spaces encountered often in practice, the algorithms approximate the value function with a (usually linear) function approximation architecture. Thus, the quality of the policy obtained by the algorithms are contingent upon the quality of the approximation architecture and selection of approximation architecture is in itself a hot topic of research in RL. In contrast, we employ a policy evaluation technique which does not resort to function approximation for the value function and works with a Monte Carlo like scheme instead. 

\section{The Setting}
\label{section:setting}
We consider a stochastic discrete-time system
with state space $\mathcal X \subset \mathbb R^{d_{\mathcal X}}$, $d_{\mathcal X} \in \mathbb N$ and action space $\mathcal U \subset \mathbb R^{d_{\mathcal U}}$, $d_{\mathcal U} \in \mathbb N$. The dynamics of this system is governed by:
\begin{align*}
x_{t+1} = f\left(x_t,u_t,w_t \right), \qquad \forall t \in \mathbb N
\end{align*}
where $x_t$ and $u_t$ denote the state and action at time $t \in \mathbb N$, while $w_t \in \mathcal W$ denotes a random disturbance  drawn according to a probability distribution $p_{\mathcal W}(\cdot)$. Each system transition from time $t$ to $t+1$ incurs an instantaneous cost $c\left(x_t,u_t,w_t\right)$.
We assume that the cost function is bounded and translated into the interval $[0,1]$. 

Let $\mu : \mathcal X \rightarrow \mathcal U $ be a control policy that maps states to actions. 
In this paper, we consider a class of policies parameterized by $\theta \in \Theta$, i.e., $\mu^{\theta} :  \mathcal X \rightarrow \mathcal U$. We assume that $\Theta$ is a compact and convex subset of $\mathbb R^N, N \in \mathbb N$. Since a policy $\mu$ is identifiable with its parameter $\theta$, we shall use them interchangeably in the paper. 

The classical performance criterion  for evaluating a policy $\mu$ is its (expected) cost-to-go, which is the discounted sum of costs that an agent receives, while starting from a given initial state $x$ and then following a  policy $\mu$, i.e.,

\begin{align}
J^\mu(x_0) &= \mathbb  E \left[     \sum_{t=0}^{\infty}\gamma^t c(x_t,\mu(x_t),w_t) \mid x_0, \mu  \right],
\label{performance_criterion}\\
\mbox{ where } x_{t+1} &=  f(x_t,\mu(x_t),w_t) \text{ and }  
w_t \sim p_{\mathcal W}(\cdot),  \forall t \in \mathbb N.\nonumber
\end{align}
In the above, $\gamma \in (0,1)$ denotes the discount factor.

In a batch mode RL setting, the objective is to find a policy that minimizes the cost-to-go $J^\mu(x_0)$. However, the problem is challenging since the functions $f$, $c$ and $p_{\mathcal W}(\cdot)$ are unknown (not even accessible to simulation). Instead, we are provided with a batch collection of $n \in \mathbb N \setminus \{ 0\}$ one-step system transitions $\mathcal F_n$, defined as 
$${\mathcal  F_{n}} = \left\{ \left(x^l,u^l, c^l,y^l \right)\right\}_{l=1}^{n},$$ where $c^l := c\left(x^{l},u^{l},w^{l}\right)$ is the instantaneous cost and $y^{l}  :=  f\left(x^{l},u^{l},w^{l}\right)$ is the next state. Here, both $c^l$ and $y^l$ are governed by the disturbance sequence $w^{l} \sim p_{\mathcal W}(\cdot)$, for all $l \in \{ 1, \ldots, n \}$.

The algorithms that we present next incrementally update the policy parameter $\theta$ in the negative descent direction using either the gradient or Hessian of $J^{\theta}(x_0)$. The underlying policy evaluator that provides the cost-to-go inputs for any $\theta$ is based on MFMC, while the gradient/Hessian estimates are based on the principle of simultaneous perturbation (\cite{Bhatnagar13SR}).

\section{Algorithm Structure}
\label{section:structure}
In a deterministic optimization setting, an algorithm attempting to find the minima of the cost-to-go $J^\theta(x_0)$ would update the policy parameter in the descent direction as follows:
\begin{equation}
\label{eq:theta_descent_det}
\theta_i(t+1) = \Gamma_i( \theta(t) - a(t) A_t^{-1} \nabla_\theta J^\theta(x_0)),
\end{equation}
where $A_t$ is a positive definite matrix and $a(t)$ is a step-size that satisfies standard stochastic approximation conditions: $\sum\limits_t a(t) = \infty$ and $\sum\limits_t a(t)^2 < \infty$.  Further, $\Gamma(\theta) = (\Gamma_1(\theta_1),\ldots,\Gamma_N(\theta_N))$ is a projection operator that projects the iterate $\theta$ to the nearest point in the set $\Theta \in \R^N$. The projection is necessary to ensure stability of the iterate $\theta$ and hence the overall convergence of the 
scheme \eqref{eq:theta_descent_det}.

For the purpose of obtaining the estimate of the cost-to-go vector $J^\theta(x_0)$ for any $\theta$, we adapt the MFMC (for Model-Free Monte Carlo) estimator proposed by \cite{Fonteneau2010AISTATS}) to our (infinite-horizon discounted) setting\footnote{Besides being adapted to the batch mode setting, the MFMC estimator also has the advantage of having a linear computational complexity and consistency properties (see Section \ref{section:MFMC}).}. The MFMC estimator works by rebuilding (from one-step transitions taken in $\mathcal F_n$) artificial trajectories that emulate the trajectories that could be obtained if one could do Monte Carlo simulations. An estimate $\hat J^\theta$ of the cost-to-go $J^\theta$ is obtained by averaging the cumulative discounted cost of the rebuilt artificial trajectories. 

Using the estimates of MFMC, it is necessary to build a higher-level control loop to update the parameter $\theta$ in the descent direction as given by \eqref{eq:theta_descent_det}. However, closed form expressions of the gradient and the Hessian of $J^\theta(x_0)$ are not available and instead, we only have (biased) estimates of $J^\theta(x_0)$ from MFMC. Thus, the requirement is for a simulation-optimization scheme that approximates the gradient/Hessian of $J^\theta(x_0)$ using estimates from MFMC. 

Simultaneous perturbation methods \cite{Bhatnagar13SR} are well-known simulation optimization schemes that perturb the parameter uniformly in each direction in order to find the minima of a function observable only via simulation. These methods are attractive since they require only two simulations irrespective of the parameter dimension.
Our algorithms are based on two popular simultaneous perturbation methods - Simultaneous Perturbation Stochastic Approximation (SPSA) \cite{spall92multivariate} and Smoothed Functional \cite{katkovnik1972convergence}.
The algorithms that we propose mainly differ in the choice of $A_t$ in \eqref{eq:theta_descent_det} and the specific simultaneous perturbation method used:
\begin{description}
 \item[MCPG-SPSA.] Here $A_t = I$ (identity matrix). Thus, MCPG-SPSA is a first order scheme that updates the policy parameter in the descent direction. Further, the gradient $\nabla_\theta J^\theta(x_0)$ is estimated using SPSA. 
 \item[MCPG-SF.] This is the Smoothed functional (SF) variant of MCPG-SPSA.
\item[MCPN-SPSA.] Here $A_t=\nabla^2 J^{\theta}(x_0)$, i.e., the Hessian of the cost-to-go. Thus, MCPN is a second order scheme that update the policy parameter using a Newton step. Further, the gradient/Hessian are estimated using SPSA.
\item[MCPN-SF.] This is the SF variant of MCPN-SPSA.
\end{description}

As illustrated in Fig. \ref{fig:algorithm-flow}, our algorithms operate on the principle of simultaneous perturbation and involve the following steps:\\
  \begin{inparaenum}[\bfseries(i)]
   \item estimate, using MFMC, the cost-to-go for two perturbation sequences $\theta(t)+p_1(t)$ and $\theta(t)-p_2(t)$;\\
   \item obtain the gradient/Hessian estimates (see \eqref{eq:gradestimate}--\eqref{eq:hessianestimate}) from the cost-to-go values $J^{\theta(t)+p_1(t)}(x_0)$ and $J^{\theta(t)+p_2(t)}(x_0)$;\\
   \item update the parameter $\theta$ in the descent direction using the gradient/Hessian estimates obtained above.\\
  \end{inparaenum}
The choice of perturbation sequences $p_1(t)$ and $p_2(t)$ is specific to the algorithm (see Sections \ref{section:first-order} and \ref{section:second-order}).

\begin{algorithm}[t] 	
\begin{algorithmic}
\STATE {\bf Input:}  $\theta_0$, initial parameter vector; $\delta>0$; $\Delta$; 
\STATE MFMC($\theta$), the model free Monte Carlo like policy evaluator 
\FOR{$t = 0,1,2,\ldots$}
\STATE Call  MFMC($\theta(t)+p_1(t)$)
\STATE Call MFMC($\theta(t)+p_2(t)$)
\STATE Compute $\theta(t+1)$ (Algorithm-specific)
\ENDFOR
\STATE {\bf Return} $\theta(t)$
\end{algorithmic}
\caption{Structure of our algorithms.}
\label{alg:structure}
\end{algorithm}

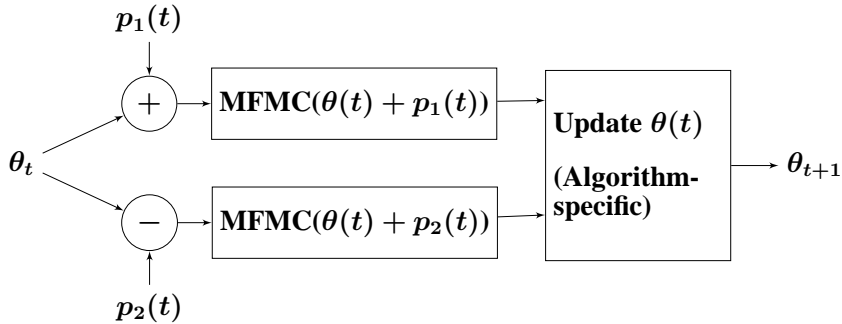
\begin{figure}[th]
\centering
\tikzstyle{block} = [draw, fill=white, rectangle,
   minimum height=3em, minimum width=6em]
\tikzstyle{sum} = [draw, fill=white, circle, node distance=1cm]
\tikzstyle{input} = [coordinate]
\tikzstyle{output} = [coordinate]
\tikzstyle{pinstyle} = [pin edge={to-,thin,black}]
\scalebox{0.9}{\begin{tikzpicture}[auto, node distance=2cm,>=latex']
\node (theta) {\large$\bm{\theta_t}$};
\node [sum, above right=0.4cm of theta, xshift=1cm] (perturb) {\large$\bm{+}$};
\node [sum, below right=0.4cm of theta, xshift=1cm] (perturb1) {\large$\bm{-}$};
\node [above=0.5cm of perturb] (noise) {\large$\bm{p_1(t)}$};
\node [below=0.5cm of perturb1] (noise1) {\large$\bm{p_2(t)}$};    
\node [block, right=0.5cm of perturb] (psim) {\large\bf {MFMC($\bm{\theta(t)+p_1(t)}$)}}; 
\node [block, right=0.5cm of perturb1] (sim) {\large\bf {MFMC($\bm{\theta(t)+p_2(t)}$)}}; 
\node [block, below right=1cm of psim, minimum height=8em, yshift=1.74cm,text width=2.5cm] (update) {\large\bf{Update $\bm{\theta(t)}$}\\[2ex]\large\bf{(Algorithm-specific)}};
\node [right=0.7cm of update] (thetanext) {\large$\bm{\theta_{t+1}}$};

\draw [->] (perturb) --  (psim);
\draw [->] (perturb1) --  (sim);
\draw [->] (noise) -- (perturb);
\draw [->] (noise1) -- (perturb1);
\draw [->] (psim) -- 
(update.145);
\draw [->] (sim) --  
(update.208);
\draw [->] (update) -- (thetanext);
\draw [->] (theta) --   (perturb);
\draw [->] (theta) --   (perturb1);
\end{tikzpicture}}
\caption{Overall flow of simultaneous perturbation algorithms.}
\label{fig:algorithm-flow}
\end{figure}

\begin{remark}
From a theoretical standpoint, the setting considered here is of deterministic optimization and the estimates from MFMC have non-zero, albeit bounded, non-stochastic bias for a given sample of transitions. This is unlike earlier work on SPSA, which mostly feature a stochastic noise component that is zero-mean. While we establish bounds on the bias of MFMC (see Lemmas 1 and 2 in the Appendix), it is a challenge to establish asymptotic convergence and in this regard, we note the difficulties involved in Section \ref{sec:conv-mcpg}.
\end{remark}

\section{MFMC Estimation of a Policy}
\label{section:MFMC}

For the purpose of policy evaluation given a batch of samples, we adapt the Model-free Monte Carlo estimator (MFMC) algorithm, proposed by \cite{Fonteneau2010AISTATS}, to an infinite horizon discounted setting.

From a sample of transitions $\mathcal F_n$, the MFMC estimator rebuilds $p \in \mathbb N \setminus \{ 0 \}$ (truncated) artificial  trajectories. These artificial trajectories are used as approximations of $p$ trajectories that could be generated by simulating the policy $\mu^{\theta}$ we want to evaluate. The final MFMC estimate $\hat J^{\theta}(x_0)$ is obtained  by averaging  the  cumulative discounted  costs  over  these truncated artificial trajectories. 

The trajectories here are rebuilt in a manner similar to the procedure outlined by \cite{Fonteneau2010AISTATS}. However, in our (infinite horizon) setting, the horizon needs to be truncated for rebuilding the trajectories. To this end, we introduce a truncation parameter $T$ that defines the length of the rebuilt trajectories. To limit the looseness induced by such a truncation, the value of the parameter $T$ should be chosen as a function of the discount factor $\gamma$, for instance, $T= \Omega\left( \frac{1}{1-\gamma}  \right)$.
\begin{algorithm} 	
\begin{algorithmic}
\STATE {\bf Input:}  $\mathcal F_{n},  \mu^{\theta}(.,.),  x_{0}, d(.,.), T, p$
\STATE $\mathcal G$: current set of not yet used   one-step transitions   in $\mathcal F_{n}$; Initially, 
$\mathcal G \leftarrow \mathcal F_{n}$;
\FOR {$i=1$ to $p$} 
\STATE $t \leftarrow 0$; $x_{t}^{i} \leftarrow x_{0}$;
\WHILE { $t < T$ }
\STATE $u_{t}^{i} \leftarrow \mu^{\theta}\left(x_{t}^{i}\right)$;
\STATE 
  $\mathcal H  \leftarrow \underset {(x,u,c,y) \in \mathcal G} {\arg\min} \quad  d\left((x,u), \left(x_{t}^{i},   u_{t}^{i}\right)\right)$;
\STATE $l_{t}^{i} \leftarrow$ lowest index in $\mathcal F_{n}$ of the   transitions  that  belong  to  $\mathcal H$;
\STATE $t \leftarrow  t+1$; $x_{t}^{i} \leftarrow y^{l^i_t}$;
\STATE $\mathcal G \leftarrow   \mathcal G \setminus  \left\{\left(x^{l^i_t},u^{l^i_t},c^{l^i_t},
  y^{l^i_t}\right)\right\}$;
\ENDWHILE
\ENDFOR
\STATE {\bf Return} $ \hat J^\theta\left(x_0 \right) = \frac {1} {p} \sum_{i=1}^{p}
\sum_{t=0}^{T-1} \gamma^t c^{l_{t}^{i}} .$
\end{algorithmic}
\caption{MFMC algorithm.\label{algo}}
\end{algorithm}
The MFMC estimation can be computed using the algorithm provided in Algorithm \ref{algo}.
\begin{definition}[Model-free Monte Carlo Estimator]
\begin{align}
\hat J^\theta\left(x_0 \right) & =  \frac {1} {p} \sum_{i=1}^{p}
\sum_{t=0}^{T-1} \gamma^t c^{l_{t}^{i}} .  \nonumber
\end{align}
where $\left\{l_{t}^{i} \right\}_{i=1,  t=0}^{i=p,  t=T-1}$ denotes the set of indices of the transitions selected by the MFMC algorithm (see Algorithm \ref{algo}).
\end{definition}
Note  that  the  computation of  the  MFMC  estimator $\hat J^\theta\left(x_0 \right)$  has  a linear  complexity  with respect to the cardinality $n$  of $\mathcal F_{n}$, the number of artificial trajectories $p$ and the optimization horizon $T$. 

\begin{remark}
Through Lemmas 1 and 2 in the Appendix, we bound the distance between the MFMC estimate $\hat J^\theta\left(x_0 \right)$ and the true cost-to-go $J^\theta(x_0)$ in expectation and high probability, respectively. 
\end{remark}

\section{Algorithms}
\label{section:algorithms}


\subsection{First order algorithms}
\label{section:first-order}
\subsubsection{Gradient estimates}
\textbf{SPSA} based estimation of the gradient of the cost-to-go is illustrated as follows: For the simple case of a scalar parameter $\theta$,  
  \begin{align}
\frac{d  J^{\theta}}{d\theta} \approx
\left(\dfrac{ J^{\theta+\delta} -  J^{\theta}}{\delta}\right).
\label{eq:scalar-spsa}
\end{align}
The correctness of the above estimate can be seen by first $J^{\theta+\delta}$ and $J^{\theta-\delta}$ around $\theta$ using a Taylor expansion as follows:
  \begin{align*}
   &J^{\theta+\delta} =  J^{\theta} + \delta \frac{d  J^{\theta}}{d\theta} +  O(\delta^2),
   J^{\theta-\delta} =  J^{\theta} - \delta \frac{d  J^{\theta}}{d\theta}  +  O(\delta^2).\\
   &\text{Thus, }\dfrac{ J^{\theta+\delta} -  J^{\theta-\delta}}{2\delta} = \dfrac{d  J^{\theta}}{d\theta} + O(\delta).
\end{align*}
From the above, it is easy to see that the estimate \eqref{eq:scalar-spsa} converges to the true gradient $\dfrac{d  J^{\theta}}{d\theta}$ in the limit as $\delta \rightarrow 0$.

The above idea of simultaneous perturbation can be extended to a vector-valued parameter $\theta$ by perturbing each co-ordinate of $\theta$ uniformly using Rademacher random variables. The resulting SPSA based estimate of the gradient $\nabla_{\theta} J^{\theta}(x_0)$ is as follows:
\begin{align}
\label{eq:gradestimate}
 \nabla_{\theta_i} J^{\theta}(x_0) \approx  
\frac{J^{\theta +\delta\Delta}(x_0) - J^{\theta-\delta\Delta}(x_0)}{2\delta\Delta_i},
\end{align}
where $\Delta = (\Delta_1,\ldots,\Delta_N)^T$ with each $\Delta_i$ being Rademacher random variables. 

\textbf{SF} based estimation of the gradient of the cost-to-go is given by
\begin{equation}
\label{eq:sf-grad-estimate}
 \nabla_{\theta_i} J^{\theta}(x_0) \approx \dfrac{\Delta_i}{\delta} 
\left( J^{\theta+\delta\Delta}(x_0) -  J^{\theta-\delta\Delta}(x_0) \right ),
\end{equation}
where $\Delta$ is a $(|N|)$-vector of independent $\N(0,1)$ random variables. 

\subsubsection{MCPG-SPSA and MCPG-SF algorithms}
\label{sec:mcpg}
On the basis of the gradient estimate in \eqref{eq:gradestimate}--\eqref{eq:sf-grad-estimate}, the SPSA and SF variants update the policy parameter $\theta$ as follows: For all $t \ge 1$, update
\begin{align}
\label{eq:spsa-update-rule}
\text{\bf SPSA: } &\theta_{i}(t+1)  =  \Gamma_i \bigg( \theta_{i}(t) - a(t)
\frac{\hat J^{\theta(t)+\delta\Delta(t)}(x_0) - \hat J^{\theta(t)-\delta\Delta(t)}(x_0)}{2\delta \Delta_{i}(t)}
 \bigg),\\
\label{eq:sf-update-rule}
 \text{\bf SF: } &\theta_{i}(t+1)  =  \Gamma_i \bigg( \theta_{i}(t) - a(t)
\frac{\Delta_{i}(t)}{2\delta}(\hat J^{\theta(t)+\delta\Delta(t)}(x_0) - \hat J^{\theta(t)-\delta\Delta(t)}(x_0))
 \bigg),
\end{align}
for all $i = 1, 2, \dots, N$.
In the above,\\ 
\begin{inparaenum}[\bfseries(i)]
\item $\delta>0$ is a small fixed constant and $\Delta(t)$ is a $N$-vector of independent Rademacher random variables for SPSA and standard Gaussian random variables for SF; \\
\item $\hat J^{\theta(t)+\delta\Delta(t)}(x_0)$ and $\hat J^{\theta(t)-\delta\Delta(t)}(x_0)$ are the MFMC policy evaluator's estimates of the cost-to-go corresponding to the parameters $\theta+\delta\Delta$ and $\theta-\delta\Delta$, respectively.\\
\item $\Gamma(\theta) = (\Gamma_1(\theta_1),\ldots,\Gamma_N(\theta_N))^T$ is an operator that projects the iterate $\theta$ to the closest point in a compact and convex set $\Theta \in \R^N$;
\item $\{a(t),t \ge 1\}$ is a step-size sequence that satisfies the standard stochastic approximation conditions.
\end{inparaenum}

\begin{remark}
A standard approach to accelerate stochastic approximation schemes is to use Polyak-Ruppert averaging, i.e., to return the averaged iterate $\bar \theta_{t+1} := \sum\limits_{s=1}^{t} \theta_s$ instead of $\theta_t$. 
\end{remark}

\subsection{Second order algorithms}
\label{section:second-order}
For the second order methods, we also need an estimate of the Hessian $\nabla_{\theta}^2 J^{\theta}(x_0)$, in addition to the gradient.
\subsubsection{Hessian estimates}
\textbf{SPSA} based estimate of the Hessian $\nabla_{\theta}^2 J^{\theta}(x_0)$ is as follows:
\begin{align}
\label{eq:hessianestimate}
 \nabla_{\theta_i}^2 J^{\theta}(x_0) \approx 
\frac{J^{\theta +\delta\Delta+\delta\widehat{\Delta}}(x_0) - J^{\theta +\delta\Delta}(x_0)}{\delta^2\Delta_i \widehat{\Delta}_i},  
\end{align}
where $\Delta$ and $\widehat\Delta$ represent $N$-vectors of Rademacher random variables\footnote{For a precise statement of the asymptotic correctness of the gradient and Hessian estimates, see Lemmas \ref{lemma:spsa-n}--\ref{lemma:sf-n} in Appendix \ref{sec:convergence}.}.

\textbf{SF} based estimate of the Hessian $\nabla_{\theta}^2 J^{\theta}(x_0)$ is as follows:
\begin{align}
\label{eq:sf-hessian-estimate}
 \nabla_{\theta_i}^2 J^{\theta}(x_0) \approx 
\frac{1}{\delta^2} \bar{H}(\Delta)\big(J^{\theta +\delta \Delta}(x_0) + J^{\theta-\delta\Delta}(x_0)\big),
\end{align}
where $\Delta$ is a $N$ vector of independent Gaussian $\N(0,1)$ random variables and $\bar{H}(\Delta)$ is a $N \times N$ matrix defined as
\begin{equation}
\label{H-bar}
\bar{H}(\Delta) \stackrel{\triangle}{=}
\left[
\begin{array}{cccc}
\big(\Delta_{1}^2-1\big) & \Delta_{1}\Delta_{2} & \cdots &
\Delta_{1}\Delta_{N}\\
\Delta_{2}\Delta_{1}& \big(\Delta_{2}^2-1\big) & \cdots &
\Delta_{2}\Delta_{N}\\
\cdots & \cdots & \cdots & \cdots \\
\Delta_{N}\Delta_{1} & \Delta_{N}\Delta_{2} & \cdots &
\big(\Delta_{N}^2-1\big)
\end{array}
\right].
\end{equation}

\subsubsection{MCPN-SPSA and MCPN-SF algorithms}
\label{sec:mcpn}
Let $H(t) = [H_{i,j}(t)]_{i = 1,j = 1}^{|N|, |N|}$ denote the estimate of the Hessian w.r.t. $\theta$ of the cost-to-go $J^\theta(x_0)$ at instant $t$, with $H(0)=\omega I$ for some $\omega > 0$. On the basis of \eqref{eq:hessianestimate}, MCPN-SPSA would estimate the individual components $H_{i, j}(t)$ as follows:
For all $t \ge 1$, $i,j\in \{1,\ldots,N\}$, $i\le j$, update
\begin{align}
&H_{i, j}(t + 1) =  H_{i, j}(t) + a(t) \bigg ( \dfrac{\hat J^{\theta(t)+\delta\Delta(t) + \delta\widehat\Delta(t)}(x_0) - \hat J^{\theta(t)+\delta\Delta(t)}(x_0)}{\delta^2 \Delta_{j}(t) \widehat\Delta_{i}(t)} - H_{i, j}(t) \bigg ),
\end{align}
and for $i > j$, set $H_{i, j}(t+1) = H_{j, i}(t+1)$. In the above, $\delta>0$ is a small fixed constant and $\Delta(t)$ and $\hat\Delta(t)$ are $N$ vectors of Rademacher random variables.
Now form the Hessian inverse matrix $M(t) = \Upsilon(H(t))^{-1}$. The operator $\Upsilon(\cdot)$ ensures that the Hessian estimates stay within the set of positive definite and symmetric matrices. This is a standard requirement in second-order methods (See \cite{gill1981practical} for one possible definition of $\Upsilon(\cdot)$).               
Using these quantities, MCPN-SPSA updates the parameter $\theta$ along a descent direction as follows: $\forall t \ge 1$, 
\begin{align}
&\theta_{i}(t+1)  =  \Gamma_i \bigg( \theta_{i}(t) -  a(t)\sum\limits_{j = 1}^{N} M_{i, j}(t) \dfrac{\hat J^{\theta(t)+\delta\Delta(t)}(x_0) - \hat J^{\theta(t)-\delta\Delta(t)}(x_0)}{2\delta \Delta_{j}(t)}
 \bigg).\label{eq:hessian-update-rule}
\end{align}

Along similar lines, using \eqref{eq:sf-hessian-estimate}, the SF variant of the above algorithm would update the Hessian estimate as follows:
For all $t \ge 1$, $i,j,k\in \{1,\ldots,N\}$, $j\le k$, update
\begin{align}
H_{i, i}(t + 1) =&  H_{i, i}(t) + a(t) \bigg (\dfrac{\big(\Delta^2_{i}(t)-1\big)}{\delta^2} (\hat J^{\theta(t)+\delta\Delta(t)}(x_0)  + \hat J^{\theta(t)-\delta\Delta(t)}(x_0)) - H_{i, j}(t) \bigg ),\\
H_{j,k}(t + 1) =&  H_{j,k}(t) +a(t) \bigg (\dfrac{\Delta_{i}(t)\Delta_{j}(t)}{\delta^2} (\hat J^{\theta(t)+\delta\Delta(t)}(x_0)   + \hat J^{\theta(t)-\delta\Delta(t)}(x_0)) - H_{j,k}(t) \bigg ),
\end{align}
and for $j > k$, we set $H_{j, k}(t+1) = H_{k, j}(t+1)$. In the above, $\Delta(t)$ is a $N$ vector of independent Gaussian $\N(0,1)$ random variables. As before, form the Hessian estimate matrix $H(t)$ and its inverse $M(t) = \Upsilon(H(t))^{-1}$. Then, the policy parameter $\theta$ is then updated as follows: $\forall t\ge 1$, 
\begin{align}
&\theta_{i}(t+1)  =  \Gamma_i \bigg( \theta_{i}(t) -  a(t)\sum\limits_{j = 1}^{N} M_{i, j}(t)  \Delta_{j}(t)\dfrac{(\hat J^{\theta(t)+\delta\Delta(t)}(x_0) - \hat J^{\theta(t)-\delta\Delta(t)}(x_0))}{2\delta}
 \bigg).\label{eq:sf-hessian-update-rule}
\end{align}

\paragraph{Woodbury variant.}
A computationally efficient alternative to inverting the Hessian $H$ is to use the Woodbury's identity.
Woodbury's identity states that \[(A + UCV)^{-1} = A^{-1} - A^{-1} U \left ( C^{-1} + V A^{-1} U \right )^{-1} V A^{-1}\] where $A$ and $C$ are invertible square matrices and $U$ and $V$ are rectangular matrices of appropriate sizes.
Let $U(t) = \dfrac{1}{\delta} \left [ \dfrac{1}{\Delta_1(t)},\dfrac{1}{\Delta_2(t)}, \ldots ,\dfrac{1}{\Delta_{|N|}(t)} \right ]^T$, $V(t) = \dfrac{1}{\delta} \left [ \dfrac{1}{\widehat\Delta_1(t)}, \dfrac{1}{\widehat\Delta_2(t)}, \ldots ,\dfrac{1}{\widehat\Delta_{|N|}(t)} \right ]$ and \\$C(t) = b(t) \left(\hat J^{\theta(t)+\delta\Delta(t) + \delta\widehat\Delta(t)}(x_0) - \hat J^{\theta(t)+\delta\Delta(t)}(x_0)\right)$

Using the Woodbury's identity, MCPN algorithm would update the estimate $M(t)$ of the Hessian inverse as follows: 
  \begin{align}
\label{eq:wudbury-update-rule}
M(t + 1) = &\Upsilon \bigg (\dfrac{M(t)}{1 - a(t)} \bigg [ I -\dfrac{ C(t) U(t) V(t) M(t)}{1 - b(t) + C(t) V(t) M(t) U(t) } \bigg ] \bigg ),
\end{align}
where $M(0)= k I$, with $I$ denoting the identity matrix and $k$ is some positive constant. The update of the policy parameter $\theta(t)$ is the same as before (see \eqref{eq:hessian-update-rule}).

\section{Main Results}
\label{section:results}
\subsection{Analysis of the MFMC estimator.}
\begin{description}
\item[(A1)]We assume that  the dynamics $f$, the cost function  $ c$ and the policies $h^\theta, \forall \theta \in \Theta$ are  Lipschitz continuous, i.e., we assume  that  there exist finite constants $L_f,  L_c$ and, $L^\theta ,  \forall \theta \in \Theta$such that:\\
$\forall~(x, x',u, u', w) \in \mathcal X^2 \times \mathcal U^2 \times \mathcal W,$
\begin{align*}
\hspace{-2em}\|f(x,u,w) - f(x',u',w) \|_{\mathcal X}  &\leq  L_f ( \| x - x' \|_{\mathcal X} + \| u - u' \|_{\mathcal U} ),\\[1ex]
\hspace{-2em}| c(x,u,w) - c(x',u',w) |  &\leq  L_c ( \| x - x' \|_{\mathcal X} + \| u - u' \|_{\mathcal U} ), \\[1ex]
\hspace{-2em} \| h^\theta(x) - h^\theta(x')  \|_{\mathcal U} &\leq  L^\theta \| x - x' \|_{\mathcal X} , \forall \theta \in \Theta,
 \end{align*}
where $\|.\|_{\mathcal X}$ and  $\|.\|_{\mathcal U}$ denote the chosen norms over the spaces $\mathcal X$ and $\mathcal U$, respectively.
\item[(A2)]We suppose  that $\mathcal X \times \mathcal U$ is bounded when  measured using the distance metric  $d$, defined as follows:
\begin{align*}
 d((x,u),(x',u')) =  \| x - x' \|_{\mathcal X} +  \| u - u' \|_{\mathcal U} , \\
\forall (x,x',u,u') \in \mathcal  X^2 \times \mathcal U^2 . 
\end{align*}
\end{description}
  \begin{definition}
  Given $k \in \mathbb N \setminus \left\{ 0 \right\}$ with $k \leq n$, we define the {\it $k-$dispersion},
$\alpha_k(\mathcal   P_n)$:
\begin{align*}
\alpha_{k}(\mathcal  P_n) =  \underset {(x,u)  \in \mathcal  X \times
  \mathcal U} {\sup}  d^{\mathcal P_n}_{k}(x,u)  \ ,
\end{align*}
where $d^{\mathcal P_n}_{k}(x,u)$ denotes the distance of $(x,u)$
to its $k-$th nearest neighbor (using the distance metric $d$) in
the $\mathcal P_n$ sample, where $\mathcal P_n$ denotes the sample of state-action pairs $\mathcal P_n = \{ (x^l, u^l) \}_{l=1}^n$.
\end{definition}
The $k-$dispersion is the smallest radius such that all  $d$-balls  in $\mathcal  X \times  \mathcal U$  of  this radius  contain at least  $k$ elements from ${\mathcal P_{n}}$.
We finally define the expected value of the MFMC estimator:
\begin{definition}[Expected Value  of $\hat J^\theta\left(x_0 \right)$]$\\ $
We denote by $E^\theta_{p,\mathcal P_n}(x_0)$ the  expected value of the MFMC estimator that builds $p$ trajectories: 
\begin{align*}
E^\theta_{p,  {\mathcal P}_n}(x_0)  = \underset  { w^{1},\ldots,w^{n}  \sim    p_{\mathcal    W}(.)}    {\mathbb   E}
\left[    \hat J^\theta\left( x_0 \right)  \right] \ .
\end{align*}
\end{definition}
The following lemma bounds the bias of the MFMC estimator in expectation, while Lemma \ref{lemma_high_proba_MFMC} provides a bound in high-probability.
\begin{lemma} \label{lemma_bias_MFMC}
Under (A1)-(A2), one has:
\begin{align*}
&\left|  J^\theta(x_0) - E^\theta_{p,\mathcal  P_n}(x_0) \right| \leq C^\theta   \alpha_{pT}\left(\mathcal P_n\right) + \frac{\gamma^T}{1- \gamma} \\
&  \mbox{ with  } C^\theta = \frac{L_c}{1 -  \gamma L_f(1+L^\theta) } \sum_{t=0}^{T-1} \gamma^t  \ . \nonumber
\end{align*}
\end{lemma}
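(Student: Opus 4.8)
The plan is to split the error, via the triangle inequality, into a \emph{truncation} contribution and a \emph{trajectory-rebuilding} contribution. Introduce the $T$-step truncated cost-to-go $J^\theta_T(x_0) := \mathbb E\big[\sum_{t=0}^{T-1}\gamma^t c(x_t,\mu^\theta(x_t),w_t)\big]$, where $x_{t+1}=f(x_t,\mu^\theta(x_t),w_t)$ and the $w_t$ are i.i.d.\ $p_\mathcal W$. Then
\[
\big|J^\theta(x_0) - E^\theta_{p,\mathcal P_n}(x_0)\big| \;\le\; \big|J^\theta(x_0) - J^\theta_T(x_0)\big| \;+\; \big|J^\theta_T(x_0) - E^\theta_{p,\mathcal P_n}(x_0)\big| .
\]
Since the cost is translated into $[0,1]$, the first term is at most $\sum_{t\ge T}\gamma^t = \gamma^T/(1-\gamma)$, which is precisely the second summand in the claimed bound. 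It therefore remains to show $\big|J^\theta_T(x_0) - E^\theta_{p,\mathcal P_n}(x_0)\big| \le C^\theta\,\alpha_{pT}(\mathcal P_n)$.

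For this I would adapt the analysis of the finite-horizon MFMC estimator from \cite{Fonteneau2010AISTATS,Fonteneau2013ANOR}, working with one artificial trajectory $i$ at a time; averaging the resulting per-trajectory bound over $i=1,\dots,p$ preserves it. The crucial structural fact is that Algorithm \ref{algo} selects transitions using only their state-action coordinates $(x^l,u^l)$ --- which are fixed in $\mathcal P_n$ --- and never reuses a transition; hence, under the randomness of $w^1,\dots,w^n$ (i.i.d.\ $p_\mathcal W$), the disturbances $(w^{l^i_0},\dots,w^{l^i_{T-1}})$ carried by the selected transitions again form an i.i.d.\ $p_\mathcal W$ sequence, because the index drawn at each step is a deterministic function of $\mathcal P_n$ and of the disturbances already read, so the disturbance of the freshly drawn (hence unread) index is independent of that history. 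This permits a \emph{coupling} of artificial trajectory $i$ with a genuine Monte-Carlo rollout $\tilde x_0 = x_0$, $\tilde u_t = \mu^\theta(\tilde x_t)$, $\tilde x_{t+1} = f(\tilde x_t,\tilde u_t,w^{l^i_t})$, whose expected $T$-step discounted cost equals $J^\theta_T(x_0)$; consequently $\big|J^\theta_T(x_0) - \mathbb E[\sum_{t}\gamma^t c^{l^i_t}]\big| \le \mathbb E\big[\sum_t \gamma^t |c(\tilde x_t,\tilde u_t,w^{l^i_t}) - c^{l^i_t}|\big]$.

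Under the coupling, put $\epsilon_t := \|\tilde x_t - x^i_t\|_\mathcal X$. Because at step $t$ of trajectory $i$ at most $pT-1<pT$ transitions have been consumed, the definition of the $k$-dispersion forces $d\big((x^{l^i_t},u^{l^i_t}),(x^i_t,u^i_t)\big) \le \alpha_{pT}(\mathcal P_n)$ for every realization. Feeding this, together with $\|\tilde u_t - u^i_t\|_\mathcal U = \|\mu^\theta(\tilde x_t)-\mu^\theta(x^i_t)\|_\mathcal U \le L^\theta\epsilon_t$, into the Lipschitz bounds (A1) for $f$ and $c$ gives $\epsilon_0 = 0$, $\epsilon_{t+1} \le L_f(1+L^\theta)\epsilon_t + L_f\,\alpha_{pT}(\mathcal P_n)$ --- hence $\epsilon_t \le L_f\,\alpha_{pT}(\mathcal P_n)\sum_{j=0}^{t-1}\rho^j$ with $\rho := L_f(1+L^\theta)$ --- and the per-step cost discrepancy $\big|c(\tilde x_t,\tilde u_t,w^{l^i_t}) - c^{l^i_t}\big| \le L_c\big((1+L^\theta)\epsilon_t + \alpha_{pT}(\mathcal P_n)\big) \le L_c\,\alpha_{pT}(\mathcal P_n)\sum_{j=0}^{t}\rho^j$, valid for every realization. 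Weighting by $\gamma^t$, summing, and then interchanging the order of summation,
\[
\sum_{t=0}^{T-1}\gamma^t\sum_{j=0}^{t}\rho^j \;=\; \sum_{j=0}^{T-1}\rho^j\sum_{t=j}^{T-1}\gamma^t \;\le\; \Big(\sum_{t=0}^{T-1}\gamma^t\Big)\sum_{j\ge 0}(\gamma\rho)^j \;=\; \frac{1}{1-\gamma\rho}\sum_{t=0}^{T-1}\gamma^t ,
\]
which collapses the bound to $C^\theta\,\alpha_{pT}(\mathcal P_n)$ with $C^\theta = \frac{L_c}{1-\gamma L_f(1+L^\theta)}\sum_{t=0}^{T-1}\gamma^t$. (This last step, and the finiteness/positivity of $C^\theta$, implicitly requires $\gamma L_f(1+L^\theta)<1$.)

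The main obstacle is the coupling step: making rigorous that the disturbances attached to the selected transitions behave like an i.i.d.\ sample despite the adaptive, data-dependent selection. The resolution is a measurability argument --- at each step the chosen index is a deterministic function of the fixed state-action pairs $\mathcal P_n$ and of the disturbances read so far, and the disturbance of a freshly chosen index is independent of that history and distributed as $p_\mathcal W$ --- which one can either import essentially verbatim from \cite{Fonteneau2013ANOR} or set up via an appropriate filtration and conditioning. Everything downstream of the coupling is routine Lipschitz error-propagation followed by a geometric-series estimate.
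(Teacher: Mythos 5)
Your proof is correct and reaches the stated constant, but it is organized differently from the paper's. The paper does not split off the truncation error up front; instead it defines the $\Omega$-disturbed infinite-horizon value function $Q^{\theta,\Omega}$, proves it is Lipschitz with constant $L^\theta_Q = L_c/(1-\gamma L_f(1+L^\theta))$ (this is where the geometric series $\sum_j (\gamma L_f(1+L^\theta))^j$ is summed, once and for all), and then telescopes along each artificial trajectory: at each junction $t$ it pays $\gamma^t L^\theta_Q \psi^i_t$ for switching from the endpoint $y^{l^i_{t-1}}$ to the next selected transition, plus a single $\gamma^T/(1-\gamma)$ for truncating the tail. Your coupling argument propagates the raw state discrepancy $\epsilon_t$ forward through the Lipschitz dynamics and sums the cost errors, recovering the same factor $1/(1-\gamma L_f(1+L^\theta))$ via the interchange of the double sum; this is the same computation with the geometric series distributed differently, and it is arguably more elementary since it avoids introducing $Q^{\theta,\Omega}$. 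What the paper's route buys is reusability: the two-sided interval bounds $[b^\theta(\tau^i,x_0), a^\theta(\tau^i,x_0)]$ around $\mathbb E[R^\theta(x_0)\mid\Omega^i]$ are reused verbatim in the proof of the high-probability bound (Lemma 2), where the bounded width of that interval feeds directly into Chernoff--Hoeffding. Both proofs rest on the same two non-Lipschitz ingredients, which you correctly identify: the dispersion bound $\psi^i_t \le \alpha_{pT}(\mathcal P_n)$ (valid because at most $pT$ transitions are ever consumed), and the claim that the disturbances attached to the selected transitions are i.i.d.\ $p_{\mathcal W}$ despite the adaptive selection --- the paper dispatches this in one sentence via the law of total expectation, essentially the measurability argument you sketch. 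You are also right that $\gamma L_f(1+L^\theta)<1$ is needed; the paper states it as a hypothesis of its Lipschitz lemma rather than in (A1)--(A2).
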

\begin{lemma} \label{lemma_high_proba_MFMC}
Under (A1)-(A2), one has for any $\eta>0$:
\begin{align}
 \left|  J^\theta(x_0) - \hat  J^\theta(x_0)  \right| &\leq \left( C^\theta   \alpha_{pT}\left(\mathcal P_n\right) + \frac{\gamma^T}{1- \gamma} \right) \sqrt{\frac{2 \ln(2 / \eta)}{p}} \nonumber \\
&\leq K_\eta \label{lemma_bias_hp_ineq2}
\end{align}
 with probability at least $1 - \eta$.
In the above, $K_\eta >0 $ is a finite constant  independent of $\theta$.
\end{lemma}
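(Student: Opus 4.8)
The plan is to split the deviation between the MFMC estimate and the true cost-to-go into a \emph{bias} part, controlled by Lemma~\ref{lemma_bias_MFMC}, and a \emph{statistical fluctuation} part, controlled by a Hoeffding-type concentration inequality. Write $R^i := \sum_{t=0}^{T-1}\gamma^t c^{l^i_t}$ for the (random) return of the $i$-th rebuilt trajectory, so that $\hat J^\theta(x_0) = \frac1p\sum_{i=1}^p R^i$ and each $R^i$ lies in $[0,\tfrac{1-\gamma^T}{1-\gamma}]$ since the costs take values in $[0,1]$. I would also introduce the $T$-truncated cost-to-go $J^\theta_T(x_0) := \mathbb E\big[\sum_{t=0}^{T-1}\gamma^t c(x_t,\mu^\theta(x_t),w_t)\big]$ as an intermediate target, for which $|J^\theta(x_0)-J^\theta_T(x_0)|\le\tfrac{\gamma^T}{1-\gamma}$ trivially.

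The core step is to couple each rebuilt trajectory with a genuine trajectory of the MDP, exactly as in the proof of Lemma~\ref{lemma_bias_MFMC}: for the $i$-th rebuilt trajectory, let $\tilde R^i$ denote the $T$-step discounted return of the \emph{model-based} trajectory that starts at $x_0$, follows $\mu^\theta$, and is driven by the disturbances $w^{l^i_0},\dots,w^{l^i_{T-1}}$ attached to the transitions the MFMC algorithm selected for that trajectory. A step-by-step comparison using the Lipschitz hypotheses (A1) would give, for every realization of $\mathcal F_n$, the deterministic bound $|R^i-\tilde R^i|\le C^\theta\,\alpha_{pT}(\mathcal P_n)$ for each $i$ --- the $pT$-dispersion appears because at most $pT$ transitions are consumed over all $p$ trajectories, hence the nearest neighbour used at each step lies within $\alpha_{pT}(\mathcal P_n)$ --- and hence $|\hat J^\theta(x_0)-\tfrac1p\sum_i\tilde R^i|\le C^\theta\,\alpha_{pT}(\mathcal P_n)$. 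Next I would argue that $\tilde R^1,\dots,\tilde R^p$ are i.i.d.\ copies of a genuine $T$-step return, so that they are bounded in $[0,\tfrac{1-\gamma^T}{1-\gamma}]$ with common mean $J^\theta_T(x_0)$. This should follow from a ``reveal-the-disturbances-on-demand'' argument: conditionally on $\mathcal P_n$, the transition selected at each step depends only on $\mathcal P_n$ and on disturbances already used (through the successor states they produced), never on the disturbance about to be used; since deleting transitions guarantees that distinct trajectories use disjoint blocks of transitions, every newly used disturbance is a fresh draw from $p_{\mathcal W}$ independent of the past, so each model-based trajectory is distributed as a genuine MDP trajectory and the $\tilde R^i$ are independent across $i$. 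Hoeffding's inequality would then yield, with probability at least $1-\eta$, $|\tfrac1p\sum_i\tilde R^i-J^\theta_T(x_0)|\le\tfrac{1}{1-\gamma}\sqrt{\tfrac{\ln(2/\eta)}{2p}}$. Chaining these three estimates by the triangle inequality then yields the bound \eqref{lemma_bias_hp_ineq2}. Finally, for the $\theta$-free constant $K_\eta$, I would invoke compactness of $\Theta$ together with boundedness of $\theta\mapsto L^\theta$ and the condition $\gamma L_f(1+\sup_{\theta}L^\theta)<1$ (strengthening the per-$\theta$ hypothesis already needed for Lemma~\ref{lemma_bias_MFMC}) to get $\sup_{\theta\in\Theta}C^\theta<\infty$; since $\alpha_{pT}(\mathcal P_n)$, $\gamma^T$, $p$ and $\eta$ do not involve $\theta$, one takes $K_\eta$ to be the supremum of the right-hand side over $\theta\in\Theta$.

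I expect the main obstacle to be precisely the independence claim for the reference returns $\tilde R^i$. Because transitions are deleted as they are used, and because a single disturbance $w^l$ influences both a cost $c^l$ and a successor state $y^l$ --- hence potentially the entire downstream routing of several trajectories --- the rebuilt returns $R^i$ themselves are genuinely dependent, so a direct Hoeffding over the $R^i$ is unavailable; and a direct McDiarmid over $(w^1,\dots,w^n)$ would produce bounded-difference coefficients that fail to shrink with $n$, hence a vacuous bound. Passing to the model-based returns should repair this, but the assignment of disturbance blocks to trajectories is itself data-dependent, so the ``reveal-on-demand'' argument has to be set up carefully --- as a stopping-time / optional-sampling statement --- to conclude that the $\tilde R^i$ are \emph{exactly} i.i.d.\ genuine returns; this is the step at which the finite-horizon analysis of \cite{Fonteneau2010AISTATS} must be adapted to the present truncated, infinite-horizon setting.
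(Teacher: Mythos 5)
Your decomposition --- couple each rebuilt trajectory to a reference model-based trajectory driven by the same disjoint block of disturbances, bound the coupling error deterministically by $C^\theta\alpha_{pT}(\mathcal P_n)$ via the Lipschitz assumptions and the definition of the $pT$-dispersion, then apply Hoeffding across the $p$ independent reference returns and invoke $\sup_{\theta\in\Theta}C^\theta<\infty$ for the $\theta$-free constant --- is exactly the structure of the paper's proof; the paper's reference quantity is the conditional expectation $\mathbb E[R^\theta(x_0)\mid\Omega^i]$, which differs from your $\tilde R^i$ only by the tail $\sum_{t\ge T}\gamma^t c_t\in[0,\gamma^T/(1-\gamma)]$ that you account for separately through $J^\theta_T$. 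The one substantive divergence is the concentration step. You apply Hoeffding with the deterministic range $[0,(1-\gamma^T)/(1-\gamma)]$ of a truncated return, which yields the additive bound $C^\theta\alpha_{pT}(\mathcal P_n)+\frac{\gamma^T}{1-\gamma}+\frac{1}{1-\gamma}\sqrt{\ln(2/\eta)/(2p)}$; the paper instead applies Hoeffding using the fact that each $\mathbb E[R^\theta(x_0)\mid\Omega^i]$ lies in an interval of width $2\left(C^\theta\alpha_{pT}(\mathcal P_n)+\frac{\gamma^T}{1-\gamma}\right)$ around the observed return $\sum_t\gamma^t c^{l^i_t}$, which is what produces the multiplicative form in \eqref{lemma_bias_hp_ineq2}. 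So your argument, carried through, proves a valid high-probability bound and a finite $\theta$-independent $K_\eta$ (which is all that is used downstream), but it does not reproduce the precise inequality stated in the lemma; to get that form you must feed the width of the coupling interval, not the raw range of the return, into Hoeffding. That said, the paper's version has its own soft spots --- the intervals it uses are centered at random points, and its triangle inequality leaves an additive term $C^\theta\alpha_{pT}(\mathcal P_n)+\frac{\gamma^T}{1-\gamma}$ that the stated bound silently drops --- so your additive form is arguably closer to what is actually established. Your concerns about the independence of the reference returns and the dependence of the raw $R^i$ are legitimate; the paper dispatches them with the one-line observation that the $pT$ selected transitions are distinct, hence their disturbances are i.i.d.
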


\subsection{Analysis of the MCPG algorithm}
\label{sec:conv-mcpg}
In this section, we describe the difficulty in establishing the asymptotic convergence for the MCPG-SPSA algorithm - a difficulty common to all our algorithms.    
An important step in the analysis is to prove that the bias in the MFMC estimator contributes a asymptotically negligible term to the $\theta$-recursion \eqref{eq:spsa-update-rule}. In other words, it is required to show that \eqref{eq:spsa-update-rule} is asymptotically equivalent to the following in the sense that the difference between the two updates is $o(1)$:
\begin{align}
&\theta_i(t+1)  = \Gamma_i \bigg( \theta_i(t) -  a(t) \frac{J^{\theta(t) +\delta\Delta(t)}(x_0) - J^{\theta(t)-\delta\Delta(t)}(x_0)}{2\delta\Delta_i(t)} \bigg).
\label{eq:mcpg-eqv}
\end{align}
As a first step towards establishing this equivalence, we first re-write the $\theta$-update in \eqref{eq:spsa-update-rule} as follows:
\begin{align*}
\theta_i(t+1)  = \Gamma_i \bigg( \theta_i(t) -  a(t) \frac{J^{\theta(t) +\delta\Delta(t)}(x_0) - J^{\theta(t)-\delta\Delta(t)}(x_0)}{2\delta\Delta_i(t)} + a(t)  \xi(t) \bigg),
\end{align*}
where $\xi(t) = \dfrac{\epsilon^{\theta(t) +\delta\Delta(t)} - \epsilon^{\theta(t)-\delta\Delta(t)}}{2\delta\Delta_i(t)}$.
In the above, we have used the fact MFMC returns an estimate $\hat J^{\theta}(x_0) = J^{\theta}(x_0) + \epsilon^\theta$, with $\epsilon^\theta$ denoting the bias. 

Let $\zeta(t) = \sum_{s = 0}^{t} a(s) \xi_{s + 1}$. Then, a critical requirement for establishing the equivalence of \eqref{eq:spsa-update-rule} with \eqref{eq:mcpg-eqv} is the following condition: 
\begin{align}
\sup_{s\ge0} \left (\zeta(t+s) - \zeta(t) \right) \rightarrow 0 \text{ as } t\rightarrow\infty. 
\label{eq:bias-condition}
\end{align}
While the bias $\epsilon^\theta$ of MFMC can be bounded (see Lemmas \ref{lemma_bias_MFMC}--\ref{lemma_high_proba_MFMC}), it is difficult to ensure that the above condition holds. 

Assuming that the bias is indeed asymptotically negligible, the asymptotic convergence of MCPG can be established in a straightforward manner. In particular, using the ordinary differential equation (ODE) approach \cite{borkar2008stochastic}, it can be shown that \eqref{eq:mcpg-eqv} is a discretization (and hence converges to the equilibria) of the following ODE:  
\begin{align}
\label{eq:theta-ode}
\dot{\theta} = \bar{\Gamma}\left ( \nabla_\theta J^\theta(x_0)\right),
\end{align}
where $\bar\Gamma$ is a projection operator that ensures $\theta$ evolving according to \eqref{eq:theta-ode} remains bounded. 

\begin{remark}
The detailed proof of convergence of MCPG as well as other proposed algorithms, under the assumption that \eqref{eq:bias-condition} holds is provided in Appendix \ref{sec:convergence}.
\end{remark}

\section{Numerical Illustration}
\label{section:experiments}
\newcommand{\errorband}[5][]{ 
\pgfplotstableread[col sep=comma, skip first n=2]{#2}\datatable
    \addplot [draw=none, stack plots=y, forget plot] table [
        x={#3},
        y expr=\thisrow{#4}-2*\thisrow{#5}
    ] {\datatable};

    \addplot [draw=none, fill=gray!40, stack plots=y, area legend, #1] table [
        x={#3},
        y expr=4*\thisrow{#5}
    ] {\datatable} \closedcycle;

    \addplot [forget plot, stack plots=y,draw=none] table [x={#3}, y expr=-(\thisrow{#4}+2*\thisrow{#5})] {\datatable};
}
%
%
%

We consider the 1-dimensional system ruled by the following dynamics:
\begin{align*}
f(x, u, w) &= \mbox{sinc}(10*(x + u + w)), \text{ where}\\
\mbox{sinc}(x) &= \sin(\pi x) / (\pi x).
\end{align*}
The cost function is defined as follows:
$$c(x, u, w) = -\frac{1}{2 \pi}\exp\left(- \frac{x^2 + u^2}{2} + w \right).$$
We consider a class of linearly parameterized policies:
$$\mu^\theta(x) = \theta x, \quad \forall \theta \in [0, 1].$$
The disturbances are drawn according to a uniform distribution between $[-\frac{\epsilon}{2}, \frac{\epsilon}{2}]$ with $\epsilon = 0.01$. The initial state of the system is fixed to $x_0 = -1$ and the discount factor is set to $\gamma = 0.95$.  The truncation of artificial trajectories is set to $T= \frac{1}{1 - \gamma} = 20$, and the number of artificial trajectories rebuilt by the MFMC estimator is set to $p = \lceil \ln(n/T) \rceil$.
We give in Figure \ref{fig:expected_returns} a plot of the evolution of the expected return $J^{\theta}(x_0)$ as a function of $\theta$ (obtained through extensive Monte Carlo simulations). We observe that the expected cost-to-go $J^{\theta}(x_0)$ is minimized for values of $\theta$ around 0.06.

 \begin{figure}
    \centering
\hspace{-2em}\tabl{c}{\scalebox{1.0}{\begin{tikzpicture}
      \begin{axis}[xlabel={iterations},ylabel={$J^\theta(x_0)$}]
      \addplot[no markers,blue,thick] table[x index=0,y index=1,col sep=comma,each nth point={1}] {exp_results/plot_return_theta_Jtheta};
      \end{axis}
      \end{tikzpicture}}\\[1ex]}
    \caption{$J^\theta(x_0)$ vs. $\theta$. Note that the global minimum is $\theta_{min} = 0.06$.}
    \label{fig:expected_returns} 
\end{figure}
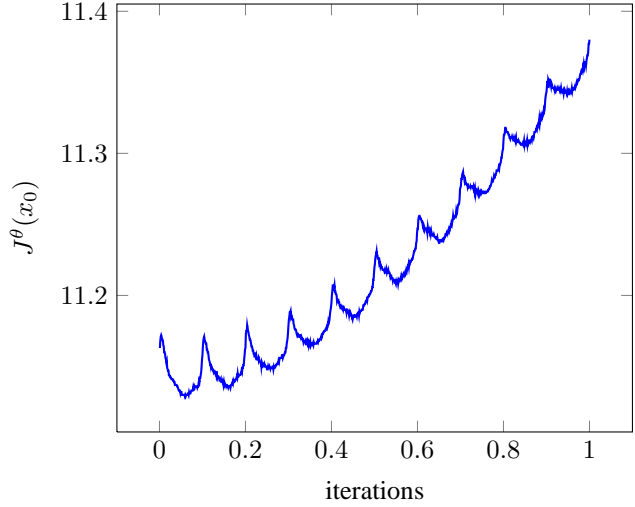

 \begin{figure}
    \centering
    \begin{tabular}{c}
    \subfigure[MCPG-SPSA vs. MCPG-SF]
    {
      \hspace{-2em}\tabl{c}{\scalebox{1.0}{\begin{tikzpicture}
      \begin{axis}[
	xlabel={iterations},
	ylabel={$\theta^{<alg>}(t)$},
       legend entries={
	 ,
        MCPG-SPSA,
        ,
        MCPG-SF
        },
        legend pos=north east,
      ]

      \errorband[red!50!white, opacity=0.3]{exp_results/smart_csv_mcpg}{0}{1}{2}
      \addplot [thick, red] table [x index=0, y index=1,col sep=comma] {exp_results/smart_csv_mcpg};
      \errorband[blue!50!white, opacity=0.3]{exp_results/smart_csv_mcpg_smooth}{0}{1}{2}
      \addplot [thick, blue] table [x index=0, y index=1,col sep=comma] {exp_results/smart_csv_mcpg_smooth};
      \end{axis}
      \end{tikzpicture}}\\}
      \label{fig:mcpg} 
    }
     \\
         \subfigure[MCPN-SPSA vs. MCPN-SF]
    {
      \hspace{-2em}\tabl{c}{\scalebox{1.0}{\begin{tikzpicture}
      \begin{axis}[
	xlabel={iterations},
	ylabel={$\theta^{<alg>}(t)$},
       legend entries={
	,
        MCPN-SPSA,
        ,
        MCPN-SF
        },
        legend pos=north east,
      ]

      \errorband[red!50!white, opacity=0.3]{exp_results/smart_csv_mcpn}{0}{1}{2}
      \addplot [thick, red] table [x index=0, y index=1,col sep=comma] {exp_results/smart_csv_mcpn};
      \errorband[blue!50!white, opacity=0.3]{exp_results/smart_csv_mcpn_smooth}{0}{1}{2}
      \addplot [thick, blue] table [x index=0, y index=1,col sep=comma] {exp_results/smart_csv_mcpn_smooth};
      \end{axis}
      \end{tikzpicture}}\\}
      \label{fig:mcpn} 
    }
\end{tabular}
     \caption{Empirical illustration of the MCPG and MCPN algorithms on an academic benchmark.}
    \label{fig:all_results}
\end{figure}
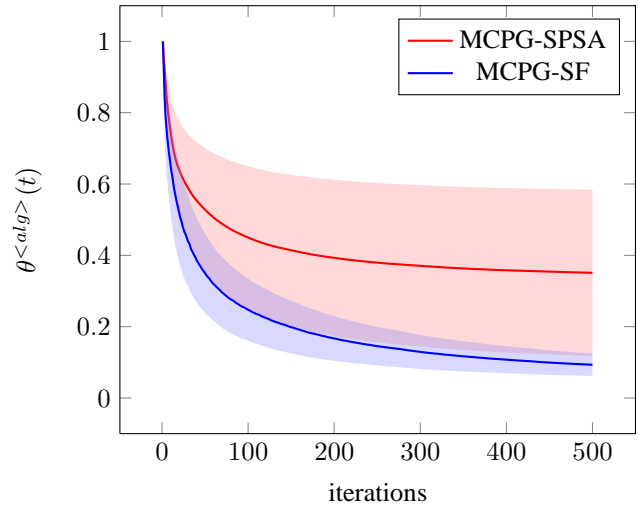
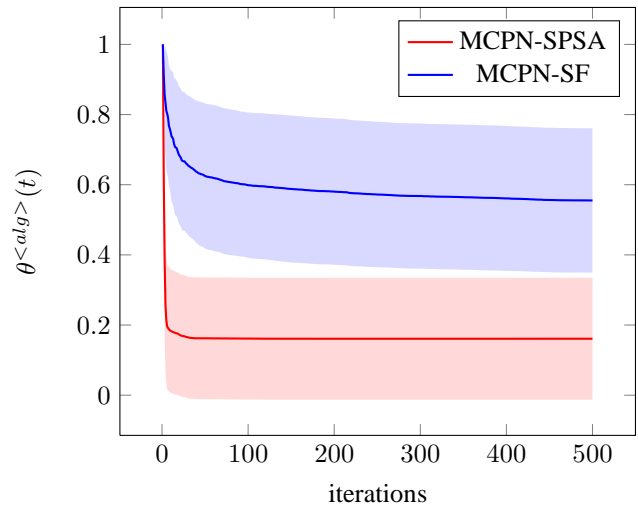



In order to observe the impact of the randomness of the set of transitions (induced by the disturbances) on the algorithms, we generate 50 samples of transitions $\mathcal F^{1}_n, \ldots,  \mathcal F^{50}_n$, each sample containing $n = 200$ transitions. For each set $\mathcal F^{i}_n, i=1 \ldots 50$, the set of state-action pairs $\mathcal P_n = \{ (x^l, u^l)  \}_{l=1}^{n}$ is the same and generated deterministically from a grid, i.e. $\mathcal P_n =  \{ ( -1 + 2*i/\sigma   ,  -1 + 2*j/\sigma )  \}_{i,j =0}^{\sigma-1}$ with $\sigma = \lfloor  \sqrt{n} \rfloor $. The randomness of each set $\mathcal F^{i}_n$ comes from the disturbances $w^l \quad l=1 \ldots n$ along which transitions are generated.

Then, for each sample $\mathcal F^{i}_n$, we run all the four algorithms - MCPG-SPSA, MCPG-SF, MCPN-SPSA and MCPN-SF -  for $500$ iterations. This generates the sequences $\left( \theta^{i, <alg>}(t) \right)_t$, where $<alg>$ denotes the algorithm. For each algorithm run, we set $\delta = 0.1$ and the step-size $a(t) = \frac{1}{t}$, for all $t$. Further, the operator $\Gamma$ projects $\theta(t)$ into the interval $[0,1]$, while the Hessian operator $\Upsilon$ projects into $[0.1,\infty)$. 

Figure \ref{fig:all_results} presents the average evolution of the parameter sequence in each of the 50 runs for all the algorithms (bands around the average curves represent 95\% confidence intervals). From these plots, we observe that the MCPG-SF approach outperforms the other algorithms on this academic benchmark, with a much lower variance and higher precision.

\section{Extension to Risk-Sensitive Criteria}
\label{section:risk}
The objective here is to minimize the variance of sum of discounted costs in addition to the usual criterion of minimizing the expected cost-to-go $J^{\theta}(x_0)$. Recent work in this direction is by \citep{prashanth2013actor}, where the authors presented actor-critic algorithms. The notable difference here is that, unlike \\\citep{prashanth2013actor}, we use a Monte Carlo like policy evaluator and do not resort to linear function approximation for the value function. Instead, we estimate both the expected and variance of the sum of costs using a MFMC estimator and use it to solve a (constrained) risk sensitive MDP. 

Let $R^{\theta}(x_0)$ denote the discounted sum of costs, defined as:
\begin{eqnarray}
R^\theta(x_0) = \sum_{t=0}^{\infty} \gamma^t c(x_t, \mu^\theta(x_t),w_t) \label{eq:risk-random_variable_return}
\end{eqnarray}
with $x_{t+1} =  f(x_t, \mu^\theta(x_t),w_t) $ and $w_t \sim p_{\mathcal W}(\cdot) $. Recall that $J^\theta(x_0)$ is the expectation of this random variable.
Further, let $V^\theta(x_0)$ denote the variance of $R^{\theta}(x_0)$.
The risk-sensitive MDP, which is a constrained optimization problem, is formulated as follows:
\begin{equation}
\label{eq:discounted-risk-measure}
\min_{\theta \in C} J^\theta(x_0) \quad \text{subject to} \quad V^\theta(x_0)\leq\alpha 
\end{equation}
In the above,  $\alpha>0$ is a constant bound on the variance that we would like to achieve. 
Following the technique of \citep{prashanth2013actor}, we relax the above problem as
$\max_\lambda\min_\theta L(\theta,\lambda) \stackrel{\triangle}{=} J^\theta(x_0)+\lambda\big(V^\theta(x_0)-\alpha\big)$, where $\lambda$ denotes the Lagrange multiplier.

\subsection{Risk-sensitive variant of MCPG}
We now describe a variant of MCPG algorithm that solves \eqref{eq:discounted-risk-measure}. 
The MFMC estimator is enhanced to return estimates of both the mean as well variance of the expected cost-to-go. Using these values, the algorithm would update $\theta$ and $\lambda$ using a two timescale procedure as follows - \begin{inparaenum}[\bfseries(i)]                                                                                                      
\item a faster timescale $a(t)$ for gradient descent in the primal for the $\theta$ policy parameter;
\item a slower timescale $b(t)$ for the ascent in the dual for the Lagrange multiplier $\lambda$.
 \end{inparaenum}
 
The variance $V^\theta(x_0)$ can be estimated by combining the costs given by the artificial trajectories with the classical estimator of the variance, as follows:
\begin{eqnarray*}
\hat {V}^{\theta}(x_0) = \frac{1}{p-1} \sum_{i=1}^{p} \left( \sum_{t=0}^{T-1} \gamma^t c^{l^i_t}  - \hat J^\theta(x_0)  \right)^2
\end{eqnarray*}
We now use SPSA estimates of the gradient of the Lagrangian $L(\theta,\lambda)$ to descend in the primal and the sample of the constraint on the variance for the ascent in the Lagrange multipliers (Note: $\nabla_\lambda L(\theta,\lambda) = V^\theta(x_0)-\alpha$).
This results in the following update rule for the risk-sensitive variant of MCPG algorithm:
\begin{align}
\theta_{i}(t+1)  &=  \Gamma_i \bigg( \theta_{i}(t)- \frac{a(t)}{2\delta \Delta_{i}(t)}
\big(\hat J^{\theta(t)+\delta\Delta(t)}(x_0) - \hat J^{\theta(t)-\delta\Delta(t)}(x_0)
 + \lambda(t) (\hat V^{\theta(t)+\delta\Delta(t)}(x_0) - \hat V^{\theta(t)-\delta\Delta(t)}(x_0)\big) \bigg),\nonumber\\
 \lambda(t+1) &= \Gamma_\lambda\bigg[\lambda(t) + b(t)\Big(\hat V^{\theta(t)}(x_0)  - \alpha \Big)\bigg]. \label{eq:spsa-risk-update}
\end{align}
In the above,  $\Gamma_\lambda$ is an operator that projects to $[0,\lambda_{\max}]$, where $0<\lambda_{\max} < \infty$, while $\Gamma(\cdot)$ is the projection operator that was defined in Section \ref{sec:mcpg} for the MCPG algorithm. 
  

\begin{remark}
As stated by \citep{Fonteneau2013ANOR}, one can also use the MFMC estimator to output a Value-at-Risk (VaR)-like criterion as follows:
Let $b \in \mathbb R$ and $c \in [0, 1[$.
\begin{eqnarray*}
\hat J^{\theta,(b,c)}_{RS}(x_0)  = \left\{  \begin{matrix}  + \infty \qquad &  \mbox{ if } \frac{1} {p} \sum_{i=1}^{p} \mathbb I_{\left\{ \mathbf c^{i} > b \right\}}  > c  \ , \\ \hat J^\theta\left(x_0  \right) \qquad & \mbox{ otherwise} \qquad \qquad  \quad \end{matrix}  \right.
\end{eqnarray*}
where $\mathbf c^{i}$ denotes the cost of the $i-$th artificial trajectory:
\begin{eqnarray*}
\mathbf c^{i} = \sum_{t=0}^{T-1}\gamma^t c^{l^i_t} \ .
\end{eqnarray*}
This VaR-like criterion could also be optimized within the MCPG or MCPN frameworks.
\end{remark}

\subsection{Risk-sensitive variant of MCPN}
We derive a variant of MCPN algorithm that incorporate the risk-related criterion of bounding the variance of the cost\footnote{Recall that MCPN algorithm estimated the gradient/Hessian of $J^\theta(x_0)$ alone, while not considering the variance of the return.}. As before, we use SPSA to estimate the gradient and Hessian of the Lagrangian $L(\theta,\lambda)$. 
The overall update rule of this algorithm that operates on two timescales is as follows:
  \begin{align}
H_{i, j}(t + 1) = & H_{i, j}(t) +\nonumber\\
& \hspace{-5em} a(t) \bigg ( \dfrac{\hat J^{\theta(t)+\delta\Delta(t) + \delta\widehat\Delta(t)}(x_0) - \hat J^{\theta(t)+\delta\Delta(t)}(x_0)
+ \lambda(t) \big(\hat V^{\theta(t)+\delta\Delta(t) + \delta\widehat\Delta(t)}(x_0) - \hat V^{\theta(t)+\delta\Delta(t)}(x_0)\big)}{\delta^2 \Delta_{j}(t) \widehat\Delta_{i}(t)} - H_{i, j}(t) \bigg ),\\
\theta_{i}(t+1)  = & \bar\Gamma_i \bigg( \theta_{i}(t) -  \nonumber \\
&\hspace{-5em}a(t)\sum\limits_{j = 1}^{N} M_{i, j}(t) \dfrac{\hat J^{\theta(t)+\delta\Delta(t)}(x_0) - \hat J^{\theta(t)-\delta\Delta(t)}(x_0) + \lambda(t) \big(\hat V^{\theta(t)+\delta\Delta(t)}(x_0) - \hat V^{\theta(t)-\delta\Delta(t)}(x_0)\big)}{2\delta \Delta_{j}(t)}
 \bigg),\\
& - \lambda(t) \frac{\hat V^{\theta(t)+\delta\Delta(t)}(x_0) - \hat V^{\theta(t)-\delta\Delta(t)}(x_0)}{2\delta \Delta_{i}(t)} \bigg),\nonumber\\
 \lambda(t+1) =& \Gamma_\lambda\bigg[\lambda(t) + b(t)\Big(\hat V^\theta(t)  - \alpha \Big)\bigg]. \label{eq:spsa-n-risk-update}
\end{align}
In the above, $\Gamma_\lambda$ is an operator that projects to $[0,\lambda_{\max}]$, where $0<\lambda_{\max} < \infty$, while $\Gamma(\theta) = (\Gamma_1(\theta_1),\ldots,\Gamma_N(\theta_N))^T$ is a projection operator that ensures $\theta$ is bounded and is the same as that used in the MCPG algorithm.  

\section{Conclusions}

We proposed novel policy search algorithms in a batch, off-policy setting. All these algorithms incorporate simultaneous perturbation estimates for the gradient as well as the Hessian of the cost-to-go vector, since the latter is unknown and only biased estimates are available. We proposed both first order policy gradient as well as second order policy Newton algorithms, using both SPSA as well as SF simultaneous perturbation schemes. We noted certain difficulties in establishing asymptotic convergence of the proposed algorithms, owing to the non-stochastic (and non-zero) bias of the MFMC policy evaluation scheme. As a future direction, we plan to investigate conditions under which the bias of MFMC is asymptotically negligible for the policy search algorithms.

\appendix
\section*{Appendix}
\section{Bias and variance of the MFMC Estimator}
\label{sec:appendix-mfmc}
%

The analysis provided in this section is an extension to the infinite horizon setting of the original analysis of the MFMC estimator \citep{Fonteneau2010AISTATS} which was done for the finite-time horizon setting . The present analysis follows the same structure.

\subsection{Proof of Lemma \ref{lemma_bias_MFMC}}
Let us first introduce the random variable $R^\theta(x_0)$ defined as follows:
\begin{eqnarray}
R^\theta(x_0) = \sum_{t=0}^{\infty} \gamma^t c(x_t, \mu^\theta(x_t),w_t) \label{eq:random_variable_return}
\end{eqnarray}
with $x_{t+1} =  f(x_t, \mu^\theta(x_t),w_t) $ and $w_t \sim p_{\mathcal W}(\cdot) $.
Before giving  the proof of Lemma \ref{lemma_bias_MFMC},  we first give
three  preliminary  lemmas.   Given  a disturbance  sequence  $\Omega  =
\left(\Omega(0), \Omega(1), \ldots  \right)  \in  \mathcal  W^\infty$ and a policy $\mu^\theta$, we  define  the $\Omega$-disturbed         state-action         value         function
$Q^{\theta,\Omega}$  as follows: 
\begin{eqnarray*}
Q^{\theta,\Omega}(x,u)  = c(x,u,\Omega(t))  + \sum_{t=1}^{\infty}
\gamma^t c(x_{t},\mu^\theta(x_{t}),\Omega(t)) 
\end{eqnarray*}
with     $x_{1}    =     f(x,u,\Omega(0))$    and     $x_{t+1}    =
f(x_{t},\mu^{\theta}(x_{t}),\Omega(t)), \forall t  \in \mathbb N$.  Then,  we define the expected return  given $\Omega$ the quantity 
\begin{eqnarray*}
 \mathbb  E  [R^\theta(x_0)  |  \Omega]  =  \mathbb E  [R^\theta(x_0)
|  w_0  = \Omega(0)  ,  w_1  = \Omega(1)  \ldots ]  .
\end{eqnarray*}
  From
there,  we  have  the  following  trivial  result:   $  \forall
(\Omega,x_0)  \in  \mathcal W^\infty \times \mathcal X, $
\begin{eqnarray}
\mathbb E  [R^\theta(x_0) | \Omega]  =
  Q^{\theta,\Omega}(x_0,\mu^\theta(x_0)) \label{proposition_JfromQ}  \ .
\end{eqnarray}
Then, we have the following lemma.
\begin{lemma}[Lipschitz Continuity of $Q^{\theta,\Omega}$] \label{Q_lipschitz}
Assume that $L_f (1 + L^\theta) < 1/ \gamma $. Then,  
$ \forall (x,x',u,u') \in   \mathcal  X^2  \times  \mathcal  U^2,$ 
\begin{align*}
 \big|  Q^{\theta,\Omega}(x,u)  -  Q^{\theta,\Omega}(x',u') \big|  \leq
  L^{\theta}_{Q} d((x,u),(x',u')),  \text{ where }L^{\theta}_{Q}  = \frac {L_{c}} {1 - \gamma L_f(1+L^\theta)}. 
\end{align*}
\end{lemma}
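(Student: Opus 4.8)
The plan is to bound the difference $|Q^{\theta,\Omega}(x,u) - Q^{\theta,\Omega}(x',u')|$ term-by-term along the two $\Omega$-disturbed trajectories started from $(x,u)$ and $(x',u')$, respectively. Write $x_0 = x$, $u_0 = u$ and $x_0' = x'$, $u_0' = u'$, and generate both trajectories with the \emph{same} disturbance sequence $\Omega$: $x_{t+1} = f(x_t,\mu^\theta(x_t),\Omega(t))$ and $x_{t+1}' = f(x_t',\mu^\theta(x_t'),\Omega(t))$, with $u_t = \mu^\theta(x_t)$, $u_t' = \mu^\theta(x_t')$ for $t \ge 1$. By the triangle inequality applied to the series defining $Q^{\theta,\Omega}$,
\begin{align*}
\big| Q^{\theta,\Omega}(x,u) - Q^{\theta,\Omega}(x',u') \big| \le \sum_{t=0}^{\infty} \gamma^t \big| c(x_t,u_t,\Omega(t)) - c(x_t',u_t',\Omega(t)) \big|.
\end{align*}

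The first key step is to control the state gap $\|x_t - x_t'\|_{\mathcal X}$ along the trajectories. Using the Lipschitz assumption (A1) on $f$ and on $\mu^\theta$, for $t \ge 1$ we get $\|x_{t+1} - x_{t+1}'\|_{\mathcal X} \le L_f(\|x_t - x_t'\|_{\mathcal X} + \|u_t - u_t'\|_{\mathcal U}) \le L_f(1 + L^\theta)\|x_t - x_t'\|_{\mathcal X}$, and for $t = 0$ we get $\|x_1 - x_1'\|_{\mathcal X} \le L_f(\|x - x'\|_{\mathcal X} + \|u - u'\|_{\mathcal U}) = L_f \, d((x,u),(x',u'))$. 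Setting $\beta := L_f(1 + L^\theta) < 1/\gamma$ (the hypothesis) and iterating, one obtains $\|x_t - x_t'\|_{\mathcal X} \le \beta^{t-1} L_f \, d((x,u),(x',u'))$ for all $t \ge 1$, while the pair-distance at time $0$ is just $d((x,u),(x',u'))$.

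The second step is to plug these into the cost-difference bound. For $t = 0$, the Lipschitz property of $c$ gives $|c(x,u,\Omega(0)) - c(x',u',\Omega(0))| \le L_c \, d((x,u),(x',u'))$. For $t \ge 1$, since $u_t - u_t' = \mu^\theta(x_t) - \mu^\theta(x_t')$, we have $\|u_t - u_t'\|_{\mathcal U} \le L^\theta \|x_t - x_t'\|_{\mathcal X}$, so $|c(x_t,u_t,\Omega(t)) - c(x_t',u_t',\Omega(t))| \le L_c(1 + L^\theta)\|x_t - x_t'\|_{\mathcal X} \le L_c(1 + L^\theta) \beta^{t-1} L_f \, d((x,u),(x',u'))$. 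Summing with the discount factor,
\begin{align*}
\big| Q^{\theta,\Omega}(x,u) - Q^{\theta,\Omega}(x',u') \big| \le L_c \, d((x,u),(x',u')) \left( 1 + (1 + L^\theta) L_f \sum_{t=1}^{\infty} \gamma^t \beta^{t-1} \right).
\end{align*}
Since $\gamma \beta < 1$, the geometric series converges: $\sum_{t=1}^\infty \gamma^t \beta^{t-1} = \gamma/(1 - \gamma\beta)$. A short algebraic simplification of $1 + (1+L^\theta)L_f \cdot \gamma/(1-\gamma\beta) = 1 + \gamma\beta/(1-\gamma\beta) = 1/(1-\gamma\beta)$ yields exactly $L^\theta_Q = L_c/(1 - \gamma L_f(1 + L^\theta))$, as claimed.

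I do not expect a serious obstacle here; the only point requiring care is the bookkeeping of the $t=0$ term separately from $t \ge 1$ (the action gap at $t=0$ comes from $u - u'$ directly, not through $\mu^\theta$), and the verification that $\gamma\beta < 1$ so that all infinite sums are finite — which is precisely the stated hypothesis $L_f(1+L^\theta) < 1/\gamma$. One should also note that the bound is uniform in $\Omega$, which is what makes it useful for the subsequent analysis of the MFMC estimator.
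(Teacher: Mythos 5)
Your proof is correct and follows essentially the same route as the paper's: both arguments propagate the state--action gap forward through the trajectory using the Lipschitz constants of $f$, $\mu^\theta$ and $c$, obtaining a geometric series with ratio $\gamma L_f(1+L^\theta)$ that sums to $L_c/(1-\gamma L_f(1+L^\theta))$. The only difference is presentational --- you bound the infinite sum of per-step cost differences directly via an explicit induction on $\|x_t - x_t'\|_{\mathcal X}$, whereas the paper iterates the one-step recursion of $Q^{\theta,\Omega}$ --- and your careful separation of the $t=0$ term (where the action gap is $\|u-u'\|_{\mathcal U}$ rather than $L^\theta\|x-x'\|_{\mathcal X}$) matches what the paper does implicitly.
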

\paragraph{Proof of Lemma \ref{Q_lipschitz}}   For the sake  of conciseness, we denote  $ \big|
Q^{\theta,\Omega}(x,u) -  Q^{\theta,\Omega}(x',u') \big|$ by $\Delta^Q.$\\
One has:
\begin{align*}
\Delta^Q      &=      \Big|     Q^{\theta,\Omega}(x,u)      -
Q^{\theta,\Omega}(x',u')  \Big|  \\
 &\leq  \Big|  c(x,u,\Omega(0))  - c(x',u',\Omega(0)) \Big|   \\
  & + 
\gamma \Big|  Q^{\theta,\Omega}(f(x,u,\Omega(0)),\mu^\theta(f(x,u,\Omega(0))))    -
Q^{\mu^\theta,\Omega}(f(x',u',\Omega(0)),\mu^\theta(f(x',u',\Omega(0))))
 \Big|  
\end{align*}
 and  the  Lipschitz continuity  of  $c$  gives
\begin{align*}
&\Delta^Q \leq L_c  d((x,u),(x',u')) + \gamma |  Q^{\theta,\Omega}(f(x,u,\Omega(0)),\mu^\theta(f(x,u,\Omega(0))))  \\
& -
Q^{\mu^\theta,\Omega}(f(x',u',\Omega(0)),\mu^\theta(f(x',u',\Omega(0)))) |  
\end{align*}
Naming $f(x,u,\Omega(0)))$ by $y$ and $f(x',u',\Omega(0)))$ by $y'$, we have:
\begin{align*}
\Delta^Q &\leq L_c  d((x,u),(x',u'))  + \gamma |  c(y,\mu^\theta(y),\Omega(1)) +\gamma Q^{\theta,\Omega}(f(y,\mu^\theta(f(y)), \Omega(1)), \mu^\theta(f(y,\mu^\theta(f(y)), \Omega(1)))   \\
&\quad-  c(y',\mu^\theta(y'),\Omega(1) ) - \gamma
Q^{\theta,\Omega}(f(y',\mu^\theta(y'),\Omega(1)), \mu^\theta(f(y',\mu^\theta(y'),\Omega(1))))  |  
\end{align*}
Using the Lipschitz continuity of $c$, we have
\begin{align}
& \Delta^Q \leq L_c d((x,u),(x',u')) + \gamma L_c \Delta((y,\mu^\theta(y)),(y',\mu^\theta(y'))) \nonumber \\
& +\gamma^2 |  Q^{\theta,\Omega}f(y,\mu^\theta(f(y)), \Omega(1)), \mu^\theta(f(y,\mu^\theta(f(y)), \Omega(1))) \nonumber \\
&-  Q^{\theta,\Omega}f(y',\mu^\theta(f(y')), \Omega(1)), \mu^\theta(f(y',\mu^\theta(f(y')), \Omega(1)))| \label{eq:Lipschitz_rec}
\end{align}
According to the definition of $y$ and $y'$, and using the Lipschitz continuity of $f$ and $\mu^\theta$, we have:
\begin{align*}
d((y,\mu^\theta(y)),(y',\mu^\theta(y'))) &=  \|  y - y' \|_{\mathcal X} + \| \mu^\theta(y) - \mu^\theta(y') \|_{\mathcal U}  \\
& = \|  f(x,u,\Omega(0))) - f(x',u',\Omega(0)))  \|_{\mathcal X}  \\
& +   \|  \mu^\theta(f(x,u,\Omega(0)))) - \mu^\theta(f(x',u',\Omega(0))))  \|_{\mathcal U}\\
&\leq L_f d((x,u),(x',u')) + L^\theta L_f d((x,u),(x',u'))
\end{align*}
Plugging this back in equation \ref{eq:Lipschitz_rec}, we obtain:
\begin{align*}
\Delta^Q &\leq L_c d((x,u),(x',u'))  + \gamma L_c (L_f d((x,u),(x',u')) + L^\theta L_f d((x,u),(x',u'))) \\
 & +\gamma^2 |  Q^{\theta,\Omega}f(y,\mu^\theta(f(y)), \Omega(1)), \mu^\theta(f(y,\mu^\theta(f(y)), \Omega(1))) \\
 &-  Q^{\theta,\Omega}f(y',\mu^\theta(f(y')), \Omega(1)), \mu^\theta(f(y',\mu^\theta(f(y')), \Omega(1)))| \\
& = d((x,u),(x',u')) L_c \left( 1 + \gamma L_f(1+L^\theta)\right)  +\gamma^2 |  Q^{\theta,\Omega}f(y,\mu^\theta(f(y)), \Omega(1)), \mu^\theta(f(y,\mu^\theta(f(y)), \Omega(1))) \nonumber \\
& \quad -  Q^{\theta,\Omega}f(y',\mu^\theta(f(y')), \Omega(1)), \mu^\theta(f(y',\mu^\theta(f(y')), \Omega(1)))|
\end{align*}
By iterating the procedure, and assuming that $L_f (1 + L^\theta) < 1/ \gamma $ we obtain:
\begin{align*}
\Delta^Q &\leq L_c(1 + \gamma L_f(1+ L^\theta) + [\gamma L_f(1+ L^\theta)]^2 + \ldots ) \times d((x,u),(x',u')) \\
& = \frac{L_c}{1 - \gamma L_f (1 + L^\theta)} d((x,u),(x',u'))
\end{align*}
which ends the proof.

Given        a truncated artificial        trajectory        $\tau^i        =
[(x^{l^i_t},u^{l^i_t},c^{l^i_t},y^{l^i_t})]_{t=0}^{T-1} $ we denote by
$\Omega^{i}$ its associated disturbance vector $\Omega^{\tau^i} =
[w^{l^i_0},\ldots,w^{l^i_{T-1}} ]$,  i.e.  the vector made  of the $T$
unknown  disturbances that  affected  the generation  of the  one-step
transitions   $(x^{l^i_t},u^{l^i_t},c^{l^i_t},y^{l^i_t})$.  We give the following
lemma.
\begin{lemma}[Bounds     on      the     expected     return     given
  $\Omega$] \label{lemma_bounds}
  $\forall   i  \in  \{  1 , \ldots,  p \},$  \
\begin{eqnarray*}
  b^{\theta}(\tau^i,x_0)  \leq \mathbb  E  [  R^\theta(x_0)   |  \Omega^{i}
  ]  \leq   a^{\theta}(\tau^i,x_0) \ ,
\end{eqnarray*}
with
\begin{eqnarray}
  && b^{\theta}(\tau^i,x_0) = \sum_{t=0}^{T-1}\gamma^t \big[ c^{l^i_t} - L^\theta_{Q}
  \psi^i_t \big] ) - \frac{\gamma^T}{1 - \gamma} \ , \nonumber \\
  && a^{\theta}(\tau^i,x_0) = \sum_{t=0}^{T-1} \gamma^t \big[ c^{l^i_t}
  +  L^\theta_{Q}  \psi^i_t  \big]  + \frac{\gamma^T}{1 - \gamma} \ , \nonumber \\
  && \psi^i_t = d( (x^{l^i_t},
  u^{l^i_t}),(y^{l^i_{t-1}},\mu^\theta(y^{l^i_{t-1}}))) \ , \forall t \in
  \{ 0 , \ldots, T-1 \} \ , \nonumber \\ 
  &&  y^{l^i_{-1}} = x_0, \forall i \in \{  1 , \ldots, p \} .
  \nonumber 
\end{eqnarray}
\end{lemma}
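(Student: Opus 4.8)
The plan is to compare, stage by stage, an $\Omega$-disturbed return rooted at $x_0$ with the discounted cost read off the reconstructed trajectory $\tau^i$, absorbing the per-stage mismatch through the Lipschitz constant $L^\theta_Q$ of Lemma~\ref{Q_lipschitz}. Fix $i$ and write $v^i_k := \big(y^{l^i_{k-1}},\mu^\theta(y^{l^i_{k-1}})\big)$ for the query state--action pair produced at stage $k$ of Algorithm~\ref{algo} (with the convention $y^{l^i_{-1}}=x_0$, so that $v^i_0 = (x_0,\mu^\theta(x_0))$), and $\hat v^i_k := (x^{l^i_k},u^{l^i_k})$ for the state--action pair of the transition actually selected at that stage; by the definition of $\psi^i_k$ we then have $\psi^i_k = d(\hat v^i_k, v^i_k)$. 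We shall use repeatedly the elementary identities $c^{l^i_k}=c(x^{l^i_k},u^{l^i_k},w^{l^i_k})$ and $y^{l^i_k}=f(x^{l^i_k},u^{l^i_k},w^{l^i_k})$, together with \eqref{proposition_JfromQ}.

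Fix any infinite disturbance sequence $\Omega\in\mathcal W^\infty$ whose first $T$ components are $w^{l^i_0},\dots,w^{l^i_{T-1}}$, denote by $\Omega_{(k)}$ its $k$-step left shift (so $\Omega_{(k)}(t)=\Omega(t+k)$), and set $G_k := Q^{\theta,\Omega_{(k)}}(v^i_k)$ for $0\le k\le T$. The heart of the argument is the one-step estimate
\begin{equation}
\big| G_k - c^{l^i_k} - \gamma\,G_{k+1} \big| \ \le\ L^\theta_Q\,\psi^i_k, \qquad 0\le k\le T-1. \label{eq:onestep-bounds}
\end{equation}
To see this, unfold the definition of $Q^{\theta,\Omega}$ by one step at the selected pair: since $\Omega_{(k)}(0)=w^{l^i_k}$, one gets $Q^{\theta,\Omega_{(k)}}(\hat v^i_k) = c^{l^i_k} + \gamma\, Q^{\theta,\Omega_{(k+1)}}\big(y^{l^i_k},\mu^\theta(y^{l^i_k})\big) = c^{l^i_k} + \gamma\,G_{k+1}$ (it is immaterial here that $u^{l^i_k}$ need not equal $\mu^\theta(x^{l^i_k})$, since $Q^{\theta,\Omega}$ admits an arbitrary first action); and Lemma~\ref{Q_lipschitz}, whose Lipschitz constant $L^\theta_Q$ is the same for every shift $\Omega_{(k)}$, gives $\big|G_k - Q^{\theta,\Omega_{(k)}}(\hat v^i_k)\big| = \big|Q^{\theta,\Omega_{(k)}}(v^i_k) - Q^{\theta,\Omega_{(k)}}(\hat v^i_k)\big| \le L^\theta_Q\, d(v^i_k,\hat v^i_k) = L^\theta_Q\,\psi^i_k$. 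Combining the two bounds yields \eqref{eq:onestep-bounds}.

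Multiplying \eqref{eq:onestep-bounds} by $\gamma^k$ and telescoping over $k=0,\dots,T-1$ gives
\begin{equation}
\Big| G_0 - \sum_{t=0}^{T-1}\gamma^t c^{l^i_t} - \gamma^T G_T \Big| \ \le\ L^\theta_Q\sum_{t=0}^{T-1}\gamma^t\psi^i_t. \label{eq:telescoped-bounds}
\end{equation}
Because the cost takes values in $[0,1]$, one has $0\le G_T = Q^{\theta,\Omega_{(T)}}(v^i_T)\le \sum_{t\ge 0}\gamma^t = \tfrac{1}{1-\gamma}$, hence $0\le \gamma^T G_T\le \tfrac{\gamma^T}{1-\gamma}$; plugging this into \eqref{eq:telescoped-bounds} and noting that $a^\theta(\tau^i,x_0)$ and $b^\theta(\tau^i,x_0)$ depend only on $\tau^i$ and $x_0$ (not on the tail of $\Omega$) shows $b^\theta(\tau^i,x_0)\le G_0\le a^\theta(\tau^i,x_0)$. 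By \eqref{proposition_JfromQ}, $G_0 = Q^{\theta,\Omega}(x_0,\mu^\theta(x_0)) = \mathbb E[R^\theta(x_0)\mid\Omega]$, and $\mathbb E[R^\theta(x_0)\mid\Omega^i]$ is precisely the average of $\mathbb E[R^\theta(x_0)\mid\Omega]$ over the i.i.d.\ tail $w_T,w_{T+1},\dots$ of $\Omega$ — an operation that changes neither the first $T$ disturbances nor the quantities $\psi^i_t$. Integrating the inequality $b^\theta(\tau^i,x_0)\le \mathbb E[R^\theta(x_0)\mid\Omega]\le a^\theta(\tau^i,x_0)$ over that tail yields $b^\theta(\tau^i,x_0)\le \mathbb E[R^\theta(x_0)\mid\Omega^i]\le a^\theta(\tau^i,x_0)$, which is the claim.

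I expect the bookkeeping in \eqref{eq:onestep-bounds} to be the delicate part: one must track which state--action pair each copy of $Q^{\theta,\Omega}$ is evaluated at — the algorithm's query pair $v^i_k$ versus the nearest-neighbour pair $\hat v^i_k$ it selects — so that the Bellman-type unfolding of $Q^{\theta,\Omega}$ at $\hat v^i_k$ reproduces exactly $c^{l^i_k}$ and advances the index to $k+1$, while a single use of Lipschitz continuity accounts for the mismatch $\psi^i_k$. The only other point requiring care is keeping the tail of $\Omega$ random until the very last step, so that the truncation error contributes $\tfrac{\gamma^T}{1-\gamma}$ rather than twice that.
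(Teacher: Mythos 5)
Your proof is correct and follows essentially the same route as the paper's: alternating the Lipschitz bound on $Q^{\theta,\Omega}$ (Lemma~\ref{Q_lipschitz}) with a one-step Bellman unfolding at the selected transition, iterating over the $T$ stages, and absorbing the truncation error into $\frac{\gamma^T}{1-\gamma}$ using $c\in[0,1]$. Your explicit bookkeeping with the shifted sequences $\Omega_{(k)}$ and the final averaging over the tail disturbances is a slightly more careful rendering of what the paper does implicitly, but it is the same argument.
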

\paragraph{Proof of Lemma \ref{lemma_bounds}}  
Let  us first  prove the  lower bound.  With $u_0  =  \mu^\theta(x_0)$, the
  Lipschitz continuity of $Q^{\theta,\Omega^{\tau^i}}$ gives
\begin{eqnarray*}
 | Q^{\theta,\Omega^{i}}(x_0,u_0)  -
  Q^{\theta,\Omega^{i}}(x^{l^i_0},u^{l^i_0})|  \leq  L^\theta_{Q} d((x_0,u_0),(x^{l^i_0}, u^{l^i_0} )) \ .
\end{eqnarray*}
Equation (\ref{proposition_JfromQ}) gives
$$Q^{\theta,\Omega^{i}}(x_0,u_0)  = \mathbb E [ R^\theta(x_0) |  \Omega^{i} ] . $$
Thus,
\begin{eqnarray}
 \big| \mathbb E [ R^\theta(x_0)
  | \Omega^{i} ] - Q^{\theta,\Omega^{i}}(x^{l^i_0},u^{l^i_0})
  \big|
& =&  \big|  Q^{\theta,\Omega^{i}}(x_0,\mu^\theta(x_0))  -
  Q^{\theta,\Omega^{\tau^i}}(x^{l^i_0},u^{l^i_0}) \big| \nonumber  \\
&\leq&   L^\theta_{Q}   d((x_0,\mu^\theta(x_0)),(x^{l^i_0},u^{l^i_0} )) \ .
\end{eqnarray}
It follows that
\begin{eqnarray*}
Q^{\theta,\Omega^{i}}(x^{l^i_0},u^{l^i_0})   -
L^\theta_{Q} \psi^i_0  \leq \mathbb  E [ R^\theta(x_0)  | \Omega^{i} ]  \
. 
\end{eqnarray*}
Then, we know that  
\begin{eqnarray*}
Q^{\theta,\Omega^{i}}(x^{l^i_0},u^{l^i_0})
  =  c(x^{l^i_0},u^{l^i_0},w^{l^i_0})  
 +
\gamma  Q^{\theta,\Omega^{i}}\big(f(x^{l^i_0},u^{l^i_0},w^{l^i_0}),
  \mu^\theta(f(x^{l^i_0},u^{l^i_0},w^{l^i_0}))\big)   
  \    . 
\end{eqnarray*}
By       definition      of      $\Omega^{i}$,       we      have:
$c(x^{l^i_0},u^{l^i_0},w^{l^i_0})       =       c^{l^i_0}$      and
$f(x^{l^i_0},u^{l^i_0},w^{l^i_0}) = y^{l^i_0}$ . From there
\begin{eqnarray*}
Q^{\theta,\Omega^{i}}(x^{l^i_0},u^{l^i_0})     =     c^{l^i_0}    + \gamma
Q^{\theta,\Omega^{i}}(y^{l^i_0},\mu^\theta(y^{l^i_0}))  \ , 
\end{eqnarray*}
and
\begin{eqnarray*}
\gamma Q^{\theta,\Omega^{i}}(y^{l^i_0},\mu^\theta(y^{l^i_0}))  +  c^{l^i_0} -
L^\theta_{Q} \psi^i_0  
\leq  \mathbb E [ R^\theta(x_0) | \Omega^{i}  ] \
.
\end{eqnarray*}
The  Lipschitz continuity of  $Q^{\theta,\Omega^{i}}$ gives\\ 
$ \big|  Q^{\theta,\Omega^{i}}(y^{{l^i_0}},\mu^\theta(y^{l^i_0}))
  -  Q^{\theta,\Omega^{i}}(x^{l^i_1},u^{l^i_1})  \big|  \leq
  L^\theta_{Q}
  d((y^{l^i_0},\mu^\theta(y^{l^i_0})),(x^{l^i_1},u^{l^i_1})) =
  L^\theta_{Q}   \psi^i_1,$ \\
which implies that
$$  L^\theta_{Q}                       \psi^i_1                       \le
  Q^{\theta,\Omega^{i}}(y^{l^i_0},\mu^\theta(y^{l^i_0}))  \ . $$
We therefore have
\begin{eqnarray*}
\gamma Q^{\theta,\Omega^{i}}(x^{l^i_1},u^{l^i_1})     +
c^{l^i_0} - L^\theta_{Q} \psi^i_0 - \gamma L^\theta_{Q} \psi^i_1  \leq \mathbb
E [ R^\theta(x_0)  | \Omega^{i} ] .
\end{eqnarray*}
The proof is completed by  iterating this derivation, and by bounding the uncertainty induced by the truncation, which adds a term $\frac{\gamma^T}{1-\gamma}$ to the bound since the reward function $c$ takes value in $[0,1] $.  The upper bound is proved similarly.
We give a third lemma.
\begin{lemma} \label{lemma_tightness_bounds} $\forall i \in \{
  1, \ldots, p  \}, $
\begin{eqnarray*}
 a^\theta(\tau^i,x_0)  - b^\theta(\tau^i,x_0)  \leq  2 \left(  C
  \alpha_{pT}(\mathcal P_n)  + \frac{\gamma^T}{1-\gamma} \right)
\end{eqnarray*}
with   $C^\theta   = L^\theta_{Q}  \sum_{t=0}^{T-1} \gamma^t
   \ .$
\end{lemma}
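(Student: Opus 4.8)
The plan is to obtain the bound by directly subtracting the two expressions from Lemma \ref{lemma_bounds}. Writing out $a^\theta(\tau^i,x_0)$ and $b^\theta(\tau^i,x_0)$, the cost terms $\sum_{t=0}^{T-1}\gamma^t c^{l^i_t}$ cancel and the two truncation terms $\pm\frac{\gamma^T}{1-\gamma}$ reinforce, leaving the identity
\[
a^\theta(\tau^i,x_0) - b^\theta(\tau^i,x_0) = 2L^\theta_{Q}\sum_{t=0}^{T-1}\gamma^t\psi^i_t + \frac{2\gamma^T}{1-\gamma}.
\]
So the whole problem reduces to bounding the quantities $\psi^i_t$ uniformly.

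The key step is to show that $\psi^i_t \le \alpha_{pT}(\mathcal P_n)$ for every $i \in \{1,\dots,p\}$ and $t \in \{0,\dots,T-1\}$. Recall that $\psi^i_t = d\big((x^{l^i_t},u^{l^i_t}),(y^{l^i_{t-1}},\mu^\theta(y^{l^i_{t-1}}))\big)$ and that, by construction of Algorithm \ref{algo}, the pair $(x^{l^i_t},u^{l^i_t})$ is a nearest neighbour (with respect to $d$) of the query point $(x^i_t,u^i_t) = (y^{l^i_{t-1}},\mu^\theta(y^{l^i_{t-1}}))$ within the current set $\mathcal G$ of not-yet-used transitions. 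At the moment this selection is performed, the algorithm has removed fewer than $pT$ transitions from $\mathcal F_n$ (it rebuilds $p$ trajectories of length $T$, hence uses exactly $pT$ transitions in total, and the transition being selected has not yet been removed). Therefore $\mathcal G$ still contains all but at most $pT-1$ of the $n$ state--action pairs of $\mathcal P_n$. By definition of the $pT$-dispersion, the $pT$ nearest neighbours of $(x^i_t,u^i_t)$ in the full sample $\mathcal P_n$ all lie within distance $\alpha_{pT}(\mathcal P_n)$; since at most $pT-1$ of them can be absent from $\mathcal G$, at least one is still available, and the neighbour actually chosen is at least as close, giving $\psi^i_t \le \alpha_{pT}(\mathcal P_n)$.

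Combining the two, I would plug the bound on $\psi^i_t$ into the displayed identity and use $L^\theta_{Q}\sum_{t=0}^{T-1}\gamma^t\psi^i_t \le \alpha_{pT}(\mathcal P_n)\, L^\theta_{Q}\sum_{t=0}^{T-1}\gamma^t = C^\theta\,\alpha_{pT}(\mathcal P_n)$ to conclude
\[
a^\theta(\tau^i,x_0) - b^\theta(\tau^i,x_0) \le 2C^\theta\,\alpha_{pT}(\mathcal P_n) + \frac{2\gamma^T}{1-\gamma} = 2\left(C^\theta\,\alpha_{pT}(\mathcal P_n) + \frac{\gamma^T}{1-\gamma}\right),
\]
which is exactly the claimed inequality.

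The only genuinely delicate point --- everything else being algebra on the quantities supplied by Lemma \ref{lemma_bounds} --- is the bookkeeping in the nearest-neighbour argument: one must check that the number of transitions deleted from $\mathcal G$ before $l^i_t$ is selected is at most $pT-1$ for \emph{all} $i,t$ (including the last trajectory's last step), so that a single index $k=pT$ in the dispersion $\alpha_k(\mathcal P_n)$ suffices uniformly, with no dependence on $i$ or $t$.
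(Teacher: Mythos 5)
Your proof is correct and follows essentially the same route as the paper's: the same algebraic identity $a^\theta(\tau^i,x_0)-b^\theta(\tau^i,x_0)=2L^\theta_Q\sum_{t=0}^{T-1}\gamma^t\psi^i_t+\frac{2\gamma^T}{1-\gamma}$, followed by the bound $\psi^i_t\le\alpha_{pT}(\mathcal P_n)$ via the $pT$-dispersion. Your counting argument (at most $pT-1$ transitions removed from $\mathcal G$ before any given selection) is in fact a more careful justification of the step the paper states only tersely.
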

\paragraph{Proof of Lemma \ref{lemma_tightness_bounds}}
By   construction  of   the   bounds,  one   has  $a^\theta(\tau^i,x_0)   -
b^\theta(\tau^i,x_0)  = \sum_{t=0}^{T-1}  2 \gamma^t L^\theta_{Q}  \psi^i_t  + \frac{2 \gamma^T}{1-\gamma} .$ The
MFMC algorithm chooses $p  \times T$ different one-step transitions to build    the    MFMC   estimator    by    minimizing   the    distance $d((y^{l^i_{t-1}},\mu^\theta(y^{l^i_{t-1}})),(x^{l^i_{t}},u^{l^i_{t}}))$,  so by definition  of the  $k$-sparsity of
the  sample $\mathcal P_n$  with $k  = p  T$, one  has
\begin{eqnarray*}
  \psi^i_t = d((y^{l^i_{t-1}},\mu^\theta(y^{l^i_{t-1}})),(
x^{l^i_{t}},u^{l^i_{t}}))    \leq    d^{\mathcal
  P_n}_{pT}(y^{l^i_{t-1}},\mu^\theta(y^{l^i_{t-1}}))  \leq \alpha_{pT}(\mathcal P_n) \ ,
\end{eqnarray*}
which ends the proof.

Using those  three lemmas, one can  now compute an upper  bound on the
bias of the MFMC estimator.
\paragraph{Proof of Lemma \ref{lemma_bias_MFMC}}
By definition  of $a^\theta(\tau^i,x_0)$  and $b^\theta(\tau^i,x_0)$, we  have
$$\forall  i \in \{  1, \ldots, p \},  \frac {  b^\theta(\tau^i,x_0) +
  a^\theta(\tau^i,x_0)}  {2}  = \sum_{t=0}^{T-1} \gamma^t c^{l^i_t}  \  .  $$  Then,
according        to        Lemmas        \ref{lemma_bounds}        and
\ref{lemma_tightness_bounds}, we have $\forall i \in \{ 1 , \ldots, p
\} \ ,$\\
\begin{eqnarray*}
 \left| \underset {w^{1} , \ldots , w^{n} \sim  p_{\mathcal W}(.)  }
{\mathbb E} \left[ 
\mathbb E  [R^\theta(x_0) | \Omega^{i} ]  - \sum_{t=0}^{T-1} \gamma^t c^{l^i_t}
\right] \right| &\leq& \underset {w^{1} , \ldots , w^{n} \sim p_{\mathcal
    W}(.) } {\mathbb E} 
\left[ \left| \mathbb E [R^\theta(x_0) | \Omega^{i} ] -
\sum_{t=0}^{T-1} \gamma^t c^{l^i_t} \right| \right] \nonumber \\
&\leq& C^\theta \alpha_{pT}(\mathcal P_n) + \frac{\gamma^T}{1-\gamma} .
\end{eqnarray*}
Thus,
\begin{eqnarray*}
 \left| \frac {1} {p} \sum_{i=1}^{p} \underset { w^{1} , \ldots ,
  w^{n} \sim p_{\mathcal W}(.)}  {\mathbb  E}  \left[  \mathbb  E
[R^\theta(x_0)  |  \Omega^{i}  ]  - \sum_{t=0}^{T-1} \gamma^t  c^{l^i_t}
\right]   \right|  &\leq&  \frac   {1}   {p}\sum_{i=1}^{p}  \left|  \underset  {  w^{1} , \ldots , w^{n} \sim p_{\mathcal  W}(.)  } {\mathbb  E}   \left[  \mathbb  E  [R^\theta(x_0)  |
\Omega^{i}  ]  - \sum_{t=0}^{T-1} \gamma^t c^{l^i_t} \right] \right|  \\
&\leq& C^\theta \alpha_{pT}(\mathcal P_n) + \frac{\gamma^T}{1-\gamma} ,
\end{eqnarray*}
which can be reformulated
\begin{eqnarray*}
 \left| \underset  { w^{1} , \ldots , w^{n} \sim p_{\mathcal W}(.)  } {\mathbb E} \left[
\frac {1}  {p} \sum_{i=1}^{p} \mathbb E [R^\theta(x_0)  | \Omega^{i} ]
\right]  - E_{p,\mathcal P_n}^\theta(x_0)  \right| \leq  C^\theta \alpha_{pT}(\mathcal P_n) + \frac{\gamma^T}{1-\gamma} \  ,
\end{eqnarray*}
since  $  \frac  {1}  {p} \sum_{i=1}^{p}  \sum_{t=0}^{T-1}
\gamma^t  c^{l^i_t} = \hat J^\theta( x_0). $ Since  the MFMC  algorithm  chooses $p  \times  T$ different  one-step transitions,  all  the  $\{  w^{l^i_t}  \}_{i=1,t=0}^{i=p,t=T-1}$  are
i.i.d.  according  to $p_{\mathcal W}(.)$.  For all  $i \in \{1, \ldots, p  \},$  The law  of  total  expectation  gives
\begin{eqnarray*}
\underset {w^{l^i_0},\ldots,w^{l^i_{T-1}}  \sim p_{\mathcal W}(.)}
{\mathbb E} \big[   \underset  {w^{l^i_0},\ldots,w^{l^i_{T-1}}   \sim  p_{\mathcal    W}(.)}  {\mathbb E} [R^\theta(x_0) | \Omega^{i}] \big]  = \underset {w_0,\ldots,w_{T-1} \sim p_{\mathcal  W}(.)}  {\mathbb E} [R^\theta(x_0)] =
J^\theta(x_0) \ .
\end{eqnarray*}
This ends the proof.

\subsection{Proof of Lemma \ref{lemma_high_proba_MFMC}}

One first have the triangle inequality.
\begin{eqnarray*}
 \left|    \hat J^\theta\left( x_0 \right)   - J^\theta(x_0)  \right| \leq   \left|    \hat J^\theta\left(x_0 \right)   -  \frac{1}{p} \sum_{i=1}^{p} \mathbb E \left[  R^{\theta}\left(x_0 \right)  |  \Omega^i  \right]  \right| +   \left|   \frac{1}{p} \sum_{i=1}^{p} \mathbb E \left[  R^{\theta}\left(x_0\right) | \Omega^i \right]    - J^\theta(x_0)  \right|  .
\end{eqnarray*}
From the proof given above, one has the following property: $\forall i \in \{1, \ldots, p\},$
\begin{eqnarray}
 \left|    \sum_{t=0}^{T-1} \gamma^t c^{l^i_t}  -  \mathbb E \left[  R^{\theta}\left(x_0\right) | \Omega^i  \right]  \right|   \leq  C^\theta \alpha_{pT}(\mathcal  P_n) + \frac{\gamma^T}{1 - \gamma}. \label{eqn:interval_R}
\end{eqnarray}
This immediatly leads to:
\begin{eqnarray*}
 \left|    \hat J^\theta\left( x_0 \right)   -  \frac{1}{p} \sum_{i=1}^{p} \mathbb E \left[ R^{\theta}\left(x_0\right)   |  \Omega^i \right] \right| \leq  C^\theta \alpha_{pT}(\mathcal  P_n) + \frac{\gamma^T}{1 - \gamma} .
\end{eqnarray*}
From Equation \eqref{eqn:interval_R}, we have that each variable $\mathbb E \left[ R^\theta\left(x_0 \right) |  \Omega^{i} \right] $ is contained in the interval 
\begin{eqnarray*}
\left[ \sum_{t=0}^{T-1} \gamma^t c^{l^i_t} - C^\theta \alpha_{pT}(\mathcal  P_n)  - \frac{\gamma^T}{1- \gamma},
 \sum_{t=0}^{T-1} \gamma^t c^{l^i_t} + C^\theta \alpha_{pT}(\mathcal  P_n)  + \frac{\gamma^T}{1 - \gamma}   \right]
\end{eqnarray*}
 of width $2 \left( C^\theta \alpha_{pT}(\mathcal  P_n)  + \frac{\gamma^T}{1 - \gamma} \right) $ with probability one.  Since all $\{ w^{l^{i}_t} \} , i=1 \ldots p, t=0 \ldots T-1$ are i.i.d. from $p_{\mathcal W}(\cdot)$, we can apply the Chernoff-Hoeffding inequality:
\begin{eqnarray*}
 \left|   \frac{1}{p} \sum_{i=1}^{p}\mathbb E\left[  R^{\theta}\left(x_0 \right) | \Omega^{i} \right]    -   J^\theta(x_0)  \right|   =  \left|  \hat J^\theta(x_0)    -   J^\theta(x_0)  \right|   \leq    \left( C^\theta \alpha_{pT}(\mathcal  P_n)  + \frac{\gamma^T}{1- \gamma} \right) \sqrt{\frac{2 \ln(2/\eta)}{p}} 
\end{eqnarray*}
with probability at least $1 - \eta.$. The proof of Equation \ref{lemma_bias_hp_ineq2} is obtained by observing that there exists a constant $C := \sup_\theta C^\theta < \infty$. The existence of $C<\infty$ is ensured by the fact that (i) $\mu^\theta$ is continuously differentiable function of $\theta$ and (ii) $\theta$ evolves within a compact set, so the Lipschitz constant of any policy $\theta$ is finite.

\section{Asymptotic convergence of the policy gradient methods}
\label{sec:convergence}
We make the following assumptions for the analysis:
\begin{description}
 \item[(A3)] The policy $\mu^{\theta}$ is continuously differentiable for any policy parameter $\theta \in \Theta$. 
 \item[(A4)] The underlying Markov chain corresponding to any policy $\theta$ is irreducible and positive recurrent.
 \item[(A5)] The step-size sequence $a(n)$ satisfies \\$\sum \limits_{n = 1}^\infty a(n) = \infty$ and $\sum \limits_{n = 1}^\infty  a(n)^2 < \infty$.
 \item[(A6)] The bias of MFMC satisfies the following condition:
\begin{align*}
\hspace{-16em}   \text{Let }\zeta(t) = \sum_{s = 0}^{t} a(s) \xi_{s + 1},\text{ then }\quad\sup_{s\ge0} \left (\zeta(t+s) - \zeta(t) \right) \rightarrow 0 \text{ as } t\rightarrow\infty. 
\end{align*}
\end{description}
The first assumption is standard in policy gradient RL algorithms, while the second assumption ensures that each state gets visited an infinite number of times over an infinite time horizon. The third assumption above imposes standard stochastic approximation conditions on the step-sizes, while the final assumption ensures that the bias of MFMC is asymptotically negligible. 

\subsection{Analysis of MCPG-SPSA}
Before we proceed with the analysis of MCPG, we re-state the following fact regarding the bias of the estimate returned by MFMC:
Let $\epsilon^\theta$ denote the bias of the MFMC estimate $\hat J^{\theta}(x_0)$, i.e., $ \hat J^{\theta}(x_0) = J^{\theta}(x_0) + \epsilon^\theta$. Then, the bias $\epsilon^\theta$ satisfies the following bound:
     \begin{align}
     \label{eq:mfmc-bias-appendix}
    \forall \theta \in \Theta,   \l \epsilon^\theta \r \le K_\eta \mbox{ with probability at least }1-\eta.
     \end{align}
     for some positive, finite constant $K_\eta$ independent from $\theta$. Fix $\eta>0$ and let $E^\eta$ denote the set of all $\theta$ on which \eqref{eq:mfmc-bias-appendix} holds, i.e., $E^\eta = \{ \theta \in \Theta \mid \l \epsilon^\theta \r \le K_\eta \}$. 
     
We use the ordinary differential equation (ODE) approach (\cite{borkar2008stochastic}) to analyze our algorithms. 
Under (A6), the update rule \eqref{eq:spsa-update-rule} of MCPG can be seen to be asymptotically equivalent to\footnote{the equivalence is in the sense that the difference between the \eqref{eq:spsa-update-rule} and \eqref{eq:arxiv-mcpg-eqv} is $o(1)$.}:
\begin{align}
&\theta_i(t+1)  = \Gamma_i \bigg( \theta_i(t) -  a(t) \frac{J^{\theta(t) +\delta\Delta(t)}(x_0) - J^{\theta(t)-\delta\Delta(t)}(x_0)}{2\delta\Delta_i(t)} \bigg).
\label{eq:arxiv-mcpg-eqv}
\end{align}
The proof of convergence of the first order method MCPG is to a set of asymptotically stable equilibrium points of the following ODE:  
\begin{align}
\label{eq:arxiv-theta-ode}
\dot{\theta} = \bar{\Gamma}\left ( \nabla_\theta J^\theta(x_0)\right).
\end{align}
In the above, $\bar{\Gamma}$ is a projection operator that is defined as follows: For any bounded continuous function $g(\cdot)$,
\begin{align}
\label{eq:proj-ode-operator}
\bar{\Gamma}\big(g(\theta)\big) = \lim\limits_{\tau \rightarrow 0}
\dfrac{\Gamma\big(\theta + \tau g(\theta)\big) - \theta}{\tau}.
\end{align}
The projection operator $\bar{\Gamma}(\cdot)$ is necessary to ensure that $\theta$, while evolving through the ODE (\ref{eq:arxiv-theta-ode}),  stays within the bounded set $\Theta \in \R^{N}$.
Let $\Z=\big\{\theta\in C:\bar\Gamma\big(\nabla J^\theta(x_0)\big)=0\big\}$ denote the set of asymptotically stable equilibria of the ODE~\eqref{eq:arxiv-theta-ode}. The main result regarding the convergence of MCPG is as follows:
\begin{theorem}
\label{thm:arxiv-spsa-theta-convergence}
Under (A1)-(A6), for any $\eta >0$, $\theta(t)$ governed by \eqref{eq:spsa-update-rule} converges to $\Z$ in the limit as $\delta \rightarrow 0$, with probability $1-\eta$.
\end{theorem}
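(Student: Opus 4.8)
The plan is to establish the result via the ordinary differential equation (ODE) method for projected stochastic approximation (\cite{borkar2008stochastic}), treating the MFMC bias and the SPSA randomness as two separate perturbations of the mean field $\nabla_\theta J^\theta(x_0)$. The first step is to invoke (A6): writing $\hat J^\theta(x_0) = J^\theta(x_0) + \epsilon^\theta$ and using the decomposition given just before \eqref{eq:bias-condition}, the recursion \eqref{eq:spsa-update-rule} differs from \eqref{eq:arxiv-mcpg-eqv} only by the term $a(t)\xi(t)$, whose partial sums $\zeta(t)$ have vanishing oscillation by (A6); a standard extension of the Kushner--Clark lemma (Chapter~2 of \cite{borkar2008stochastic}) then shows that the two iterations share the same asymptotic behaviour on the event $E^\eta$ of probability at least $1-\eta$ on which the bias bound \eqref{eq:mfmc-bias-appendix} holds. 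Hence it suffices to analyze \eqref{eq:arxiv-mcpg-eqv}.

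Next I would cast \eqref{eq:arxiv-mcpg-eqv} in the canonical form
\begin{align*}
\theta_i(t+1) = \Gamma_i\!\Big(\theta_i(t) - a(t)\big(\nabla_{\theta_i} J^{\theta(t)}(x_0) + \beta_i(\delta,t) + M_i(t+1)\big)\Big),
\end{align*}
where $M_i(t+1)$ is the difference between the SPSA ratio and its conditional expectation given the history $\mathcal{G}_t$ up to time $t$, and $\beta_i(\delta,t)$ is the residual bias of the SPSA estimator. A Taylor expansion of $J^{\theta\pm\delta\Delta}(x_0)$ about $\theta$ (the vector-valued version of the scalar computation in Section~\ref{section:first-order}, stated precisely in Lemmas~\ref{lemma:spsa-n}--\ref{lemma:sf-n}) shows that the conditional expectation of the ratio equals $\nabla_{\theta_i} J^\theta(x_0) + O(\delta)$, so $\sup_t \|\beta(\delta,t)\| = O(\delta)$. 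Because $\Theta$ is compact and, under (A1) and (A3), $J^\theta(x_0)$ is bounded and continuously differentiable in $\theta$, and because $1/\Delta_i(t) = \Delta_i(t)$ is bounded for Rademacher $\Delta_i$, the sequence $\{M(t+1)\}$ is a square-integrable martingale-difference sequence with $\mathbb{E}\big[\|M(t+1)\|^2 \mid \mathcal{G}_t\big] \le K\big(1 + \|\theta(t)\|^2\big)$; together with (A5) this makes $\sum_t a(t) M(t+1)$ almost surely convergent.

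With these ingredients the convergence theorem for projected stochastic approximation (Kushner--Clark; \cite{borkar2008stochastic}) applies and yields that $\theta(t)$ converges almost surely (on $E^\eta$, hence with probability at least $1-\eta$) to a compact connected internally chain transitive invariant set of the projected ODE $\dot\theta = \bar\Gamma\big(\nabla_\theta J^\theta(x_0) + \beta_\delta\big)$, where $\beta_\delta$ satisfies $\|\beta_\delta\| = O(\delta)$. To obtain the stated $\delta\to0$ conclusion I would then use a continuity argument for the limit sets of this family of ODEs as the perturbation $\beta_\delta \to 0$ (an upper-semicontinuity property, cf.\ the singular-perturbation arguments in \cite{borkar2008stochastic}): as $\delta\to0$ the perturbed field converges uniformly on the compact set $\Theta$ to $\bar\Gamma\big(\nabla_\theta J^\theta(x_0)\big)$, so every limit point of the corresponding invariant sets lies in $\Z$, the equilibrium set of \eqref{eq:arxiv-theta-ode}. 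This gives convergence of $\theta(t)$ to $\Z$ in the limit as $\delta\to0$ with probability $1-\eta$.

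The main obstacle is conceptual rather than computational: the recursion mixes a genuine martingale noise (from $\Delta(t)$), which the ODE machinery absorbs routinely, with the non-stochastic, non-vanishing MFMC bias, whose asymptotic negligibility we do not prove but merely postulate in (A6). Thus the whole argument is only as strong as (A6), and the genuinely hard question --- whether (A6) can be derived from Lemmas~\ref{lemma_bias_MFMC}--\ref{lemma_high_proba_MFMC}, given that $\epsilon^\theta$ does not decay with $t$ for a fixed sample $\mathcal F_n$ --- is exactly the open problem flagged in Section~\ref{sec:conv-mcpg}. A secondary subtlety is that for each fixed $\delta$ one only obtains convergence to equilibria of a \emph{perturbed} ODE, so the $\delta\to0$ statement has to be read, and proved, as a limiting/continuity statement across the family $\{\beta_\delta\}$ rather than as a single almost-sure limit.
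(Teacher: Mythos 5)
Your proposal is correct and follows the same overall skeleton as the paper's proof: invoke (A6) to replace \eqref{eq:spsa-update-rule} by the bias-free recursion \eqref{eq:arxiv-mcpg-eqv} on the high-probability set $E^\eta$, use a Taylor expansion to relate the SPSA difference quotient to $\nabla_\theta J^\theta(x_0)$, and conclude by the Kushner--Clark theorem for projected stochastic approximation with $J^\theta(x_0)$ as Lyapunov function for the ODE \eqref{eq:arxiv-theta-ode}. Where you genuinely depart from the paper is in the treatment of the perturbation randomness. The paper's Lemma \ref{eq:spsa-correct} asserts that the cross term $\sum_{j\neq i}\frac{\Delta_j(t)}{\Delta_i(t)}\nabla_j J^{\theta(t)}(x_0)$ ``is zero since $\Delta$ are Rademacher''; pathwise this is false --- each ratio $\Delta_j/\Delta_i$ equals $\pm 1$ --- and the term vanishes only in conditional expectation. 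Your decomposition into a conditional mean $\nabla_{\theta_i}J^{\theta(t)}(x_0)+\beta_i(\delta,t)$ with $\sup_t\|\beta(\delta,t)\|=O(\delta)$ plus a bounded martingale-difference sequence $M(t+1)$, whose weighted sums converge by (A5), is the standard and correct way to absorb this term, and it repairs a real gap in the paper's argument rather than merely restating it. Your second refinement --- reading the ``$\delta\to 0$'' claim as upper semicontinuity of the invariant sets of the family of perturbed ODEs $\dot\theta=\bar\Gamma(\nabla_\theta J^\theta(x_0)+\beta_\delta)$ --- is also more precise than the paper's informal passage to the limit, at the modest cost of requiring the continuity argument you sketch. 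Both proofs share the same fundamental caveat, which you state explicitly: everything rests on (A6), which is postulated rather than derived from Lemmas \ref{lemma_bias_MFMC}--\ref{lemma_high_proba_MFMC}, since for a fixed sample $\mathcal F_n$ the bias $\epsilon^\theta$ does not decay in $t$.
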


Before proving Theorem \ref{thm:arxiv-spsa-theta-convergence}, we prove that the correctness of the SPSA-based gradient estimate \eqref{eq:gradestimate} in the following lemma\footnote{The proof is given here for the sake of completeness and the reader is referred to Chapter 5 of \cite{Bhatnagar13SR} for an extensive treatment on SPSA based gradient estimation.}:
  \begin{lemma}
   \label{eq:spsa-correct}
Recall that $\Delta = (\Delta_1,\ldots,\Delta_N)^T$ is vector of independent Rademacher random variables. We have 
\begin{align}
\label{eq:arxiv-gradestimate}
\lim_{\delta\rightarrow 0} \dfrac{J^{\theta +\delta\Delta}(x_0) - J^{\theta-\delta\Delta}(x_0)}{2\delta\Delta_i(t)} = \nabla_{i} J(\theta)(x_0).
\end{align}
  \end{lemma}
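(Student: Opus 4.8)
The plan is to establish \eqref{eq:arxiv-gradestimate} by a Taylor expansion of $J^\theta(x_0)$ around $\theta$ together with the observation that, since $\Delta_i \in \{-1,+1\}$ is Rademacher, $1/\Delta_i = \Delta_i$, so the cross terms either vanish in expectation or contribute only $O(\delta)$. Concretely, I would first invoke the smoothness of $\theta \mapsto J^\theta(x_0)$ that follows from (A3): the policy $\mu^\theta$ is continuously differentiable in $\theta$, and one argues (using the Lipschitz/smoothness structure from (A1)--(A2) and the dominated convergence/geometric-series argument already used in the proof of Lemma~\ref{Q_lipschitz}) that $J^\theta(x_0)$ is twice continuously differentiable in $\theta$ on the compact set $\Theta$, hence has bounded first and second derivatives there.

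Next I would write the second-order Taylor expansions
\begin{align*}
J^{\theta+\delta\Delta}(x_0) &= J^\theta(x_0) + \delta \sum_{j=1}^N \Delta_j \nabla_j J^\theta(x_0) + \frac{\delta^2}{2}\sum_{j,k} \Delta_j\Delta_k \nabla^2_{jk} J^\theta(x_0) + O(\delta^3),\\
J^{\theta-\delta\Delta}(x_0) &= J^\theta(x_0) - \delta \sum_{j=1}^N \Delta_j \nabla_j J^\theta(x_0) + \frac{\delta^2}{2}\sum_{j,k} \Delta_j\Delta_k \nabla^2_{jk} J^\theta(x_0) + O(\delta^3),
\end{align*}
where the $O(\delta^3)$ remainder is uniform in $\Delta$ because the third derivatives (or, with a Lagrange remainder, the second derivatives on a compact neighbourhood) are bounded. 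Subtracting, the quadratic terms cancel, giving
\[
\frac{J^{\theta+\delta\Delta}(x_0) - J^{\theta-\delta\Delta}(x_0)}{2\delta\Delta_i} = \sum_{j=1}^N \frac{\Delta_j}{\Delta_i}\nabla_j J^\theta(x_0) + O(\delta) = \nabla_i J^\theta(x_0) + \sum_{j\neq i}\frac{\Delta_j}{\Delta_i}\nabla_j J^\theta(x_0) + O(\delta),
\]
using $\Delta_i/\Delta_i = 1$. Since $\Delta$ is held fixed as $\delta\to 0$, the $O(\delta)$ term vanishes in the limit; this already yields a limit, namely $\nabla_i J^\theta(x_0) + \sum_{j\neq i}(\Delta_j/\Delta_i)\nabla_j J^\theta(x_0)$. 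To recover exactly $\nabla_i J^\theta(x_0)$ one takes expectation over $\Delta$ (or notes that the stochastic-approximation averaging in \eqref{eq:spsa-update-rule} plays this role): by independence and $\E[\Delta_j]=0$ for $j\neq i$, $\E[\Delta_j/\Delta_i] = \E[\Delta_j]\E[\Delta_i] = 0$, so the unwanted cross terms average out. I would state the lemma's conclusion in this ``in expectation over $\Delta$, in the limit $\delta\to 0$'' sense, consistent with how the estimate is used inside the $\theta$-recursion.

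The main obstacle is the justification that $J^\theta(x_0)$ is sufficiently smooth in $\theta$ (twice continuously differentiable with derivatives bounded uniformly on $\Theta$) so that the Taylor remainder is genuinely $O(\delta^2)$ or $O(\delta^3)$ uniformly in $\Delta$; this requires differentiating the infinite discounted sum through the dynamics $f$ and policy $\mu^\theta$, which is where (A1) (Lipschitz $f,c$), the contraction condition $\gamma L_f(1+L^\theta)<1$, and (A3) (smoothness of $\mu^\theta$) are all needed to push derivatives inside the expectation and sum and to obtain a convergent geometric bound. The remaining steps --- the cancellation of even-order terms and the vanishing of odd cross terms in expectation --- are routine once this regularity is in hand.
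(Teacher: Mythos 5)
Your proposal follows essentially the same route as the paper's proof: a Taylor expansion of $J^{\theta\pm\delta\Delta}(x_0)$ around $\theta$, cancellation of the even-order terms in the difference, and elimination of the cross terms $\sum_{j\neq i}(\Delta_j/\Delta_i)\nabla_j J^{\theta}(x_0)$ using the Rademacher structure. The one substantive difference is in how the cross terms are dispatched. The paper simply asserts ``Term (I) above is zero since $\Delta$ are Rademacher,'' which is not true pointwise --- for a fixed realization each ratio $\Delta_j/\Delta_i$ equals $\pm 1$ --- and so the displayed limit in the lemma, taken literally for a fixed $\Delta$, does not hold. You correctly identify this and repair it by taking expectation over $\Delta$, using independence and $\E[\Delta_j]=0$ to kill the cross terms, and you honestly restate the conclusion in the ``in expectation over $\Delta$'' sense, which is indeed how the estimate is used inside the stochastic-approximation recursion \eqref{eq:spsa-update-rule} (and is how the cited reference, Chapter 5 of \cite{Bhatnagar13SR}, states the result). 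You also flag, more explicitly than the paper does, that the Taylor expansion presupposes $J^{\theta}(x_0)$ is (at least twice) continuously differentiable in $\theta$ with a uniform remainder bound, which must be derived from (A1)--(A3) and the contraction condition $\gamma L_f(1+L^{\theta})<1$; the paper leaves this regularity implicit. In short, your argument is the same in outline but is the more careful of the two, and the adjustments you make are ones the paper's own proof actually needs.
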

\begin{proof}
Using a Taylor expansion of $J^{\theta+\delta}(x_0)$ and $J^{\theta-\delta}(x_0)$ around $\theta$, we obtain:
\begin{align}
\label{e1}
J^{\theta(t) + \delta \Delta(t)}(x_0) = J^{\theta(t)}(x_0) + \delta \Delta(t)^T \nabla J^{\theta(t)}(x_0) + O(\delta^2), \\[1ex]
J^{\theta(t) - \delta \Delta(t)}(x_0) = J^{\theta(t)}(x_0) - \delta \Delta(t)^T \nabla J^{\theta(t)}(x_0) + O(\delta^2).\label{e2}
\end{align}
From the above, it is easy to see that 
\begin{align}
&\dfrac{J^{\theta(t) + \delta \Delta(t)}(x_0) - J^{\theta(t) - \delta \Delta(t)}(x_0)}{2 \delta \Delta_i(t)}
- \nabla_i J^{\theta(t)}(x_0)\\
 =&\underbrace{\sum_{j=1,j\not=i}^{N} \frac{\Delta_j(t)}{\Delta_i(t)}\nabla_j J^{\theta(t)}(x_0)}_{(I)} + O(\delta)
\end{align}
Term (I) above is zero since $\Delta$ are Rademacher. So, it is easy to see that the estimate \eqref{eq:arxiv-gradestimate} converges to the true gradient $\nabla J^{\theta(t)}(x_0)$ in the limit as $\delta \rightarrow 0$.
\end{proof}

\begin{proof}({\bf Theorem \ref{thm:arxiv-spsa-theta-convergence}})
In lieu of (A6), it is sufficient to analyse the following equivalent update rule for MCPG on the high-probability set $E^\eta$:
\begin{align*}
&\theta_i(t+1)  = \Gamma_i \bigg( \theta_i(t) -  a(t) \frac{J^{\theta(t) +\delta\Delta(t)}(x_0) - J^{\theta(t)-\delta\Delta(t)}(x_0)}{2\delta\Delta_i(t)}
            \bigg).
\end{align*}
Now, using a standard Taylor series expansion (see Chapter 5 of \citep{Bhatnagar13SR}) it is easy to show that  $\dfrac{J^{\theta +\delta\Delta}(x_0) - J^{\theta-\delta\Delta}(x_0)}{2\delta\Delta_i(t)}$ is a biased estimator of $\nabla_\theta J^\theta(x_0)$, where the bias vanishes asymptotically. In more rigorous terms, we have
\begin{align*}
\frac{J^{\theta +\delta\Delta}(x_0) - J^{\theta-\delta\Delta}(x_0)}{2\delta\Delta_i(t)}
&\longrightarrow_{\beta \rightarrow 0} \nabla_{\theta_i} J^\theta(x_0).
\end{align*}
Thus, Eq.~\ref{eq:spsa-update-rule} can be seen to be a discretization of the ODE~\eqref{eq:arxiv-theta-ode}. Further, $\Z_\lambda$ is an asymptotically stable attractor for the ODE~\eqref{eq:arxiv-theta-ode}, with $J^\theta(x_0)$ itself serving as a strict Lyapunov function. This can be inferred as follows:
\begin{align*}
\dfrac{d J^\theta(x_0)}{d t}  
= \nabla_\theta J^\theta(x_0) \dot \theta
= \nabla_\theta J^\theta(x_0)) \bar\Gamma\big(-\nabla_\theta J^\theta(x_0)\big) < 0.  
\end{align*}
The claim now follows from Theorem 5.3.3, pp. 191-196 of~\citep{kushner-clark}. Note that the final claim holds on $E^\eta$, the set with high probability on which the bias of the MFMC estimator is bounded. 
\end{proof}

\subsection{Convergence analysis of MCPN-SPSA}
We establish that policy parameter $\theta$ governed by MCPN algorithm \eqref{eq:hessian-update-rule} converges to the set of asymptotically stable equilibria of the following ODE:  
  
\begin{align}
\label{eq:theta-second-ode}
\dot{\theta} = \bar{\Gamma}\left ( (\nabla_{\theta}^2 J^{\theta}(x_0))^{-1} \nabla_\theta J^\theta(x_0)\right).
\end{align}
In the above, $\bar{\Gamma}$ is as defined in \eqref{eq:proj-ode-operator}. 
Let $\Z=\big\{\theta\in C:\bar\Gamma\big((\nabla_{\theta}^2 J^{\theta}(x_0))^{-1}\big)=0\big\}$ denote the set of asymptotically stable equilibria of the ODE~\eqref{eq:theta-second-ode}.

The main result regarding the convergence of $\theta(t)$ governed by \eqref{eq:hessian-update-rule} is given as follows:

\begin{theorem}
\label{thm:mcpn-theta-convergence}
Under (A1)-(A6), for any $\eta >0$, $\theta(t)$ governed by \eqref{eq:hessian-update-rule} converges to $\Z$ in the limit as $\delta \rightarrow 0$, with probability $1-\eta$.
\end{theorem}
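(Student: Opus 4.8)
The plan is to mirror the structure of the proof of Theorem~\ref{thm:arxiv-spsa-theta-convergence} for MCPG-SPSA, adapting each step to the two-timescale (Hessian estimate plus parameter) recursion of MCPN-SPSA. First, invoking (A6) exactly as before, I would argue that on the high-probability set $E^\eta$ the bias terms $\epsilon^\theta$ entering both the Hessian recursion and the $\theta$-recursion \eqref{eq:hessian-update-rule} contribute only an $o(1)$ perturbation, so that it suffices to analyze the ``clean'' versions of these recursions in which $\hat J$ is replaced by the true $J$. Thus the whole argument is carried out on $E^\eta$ and the final conclusion holds with probability $1-\eta$.

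Next I would handle the correctness of the estimates. By a Taylor expansion argument analogous to Lemma~\ref{eq:spsa-correct} (and as in Chapter~7 of \cite{Bhatnagar13SR}), the quantity $\dfrac{\hat J^{\theta(t)+\delta\Delta(t)+\delta\widehat\Delta(t)}(x_0) - \hat J^{\theta(t)+\delta\Delta(t)}(x_0)}{\delta^2\Delta_j(t)\widehat\Delta_i(t)}$ is, in conditional expectation over $\Delta(t),\widehat\Delta(t)$, equal to $\nabla^2_{i,j} J^{\theta(t)}(x_0)$ plus a term that vanishes as $\delta\to0$, the off-diagonal cross terms averaging to zero because $\Delta$ and $\widehat\Delta$ are independent Rademacher vectors. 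Then, because the step-size $a(t)$ is common to the Hessian and $\theta$ recursions but the Hessian recursion is a straightforward averaging (contraction) iteration, the ODE method gives that $H(t)$ tracks $\nabla^2_\theta J^{\theta(t)}(x_0)$; after applying the operator $\Upsilon(\cdot)$ and inverting, $M(t)$ tracks $(\nabla^2_\theta J^{\theta(t)}(x_0))^{-1}$ (or its symmetric positive-definite projection). This reduces \eqref{eq:hessian-update-rule} to a discretization of the ODE \eqref{eq:theta-second-ode}.

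It then remains to show asymptotic stability of $\Z$ for \eqref{eq:theta-second-ode}. As in the MCPG proof, I would use $J^\theta(x_0)$ as a Lyapunov function: along trajectories of \eqref{eq:theta-second-ode},
\begin{align*}
\frac{dJ^\theta(x_0)}{dt} = \nabla_\theta J^\theta(x_0)^T \bar\Gamma\big(-(\nabla^2_\theta J^\theta(x_0))^{-1}\nabla_\theta J^\theta(x_0)\big) \le 0,
\end{align*}
using positive definiteness of $M$ (guaranteed by $\Upsilon$) and the standard property of $\bar\Gamma$ that in the interior of $\Theta$ the projected direction coincides with the descent direction, while on the boundary it still does not increase $J$. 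The inequality is strict outside $\Z$, so $\Z$ is the set of asymptotically stable equilibria; the convergence $\theta(t)\to\Z$ then follows from the Kushner--Clark theorem (Theorem 5.3.3 of \cite{kushner-clark}), exactly as in the first-order case, and the claim holds in the limit $\delta\to0$ with probability $1-\eta$.

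The main obstacle, as the authors themselves flag in the Remark following \eqref{eq:theta-ode} and in assumption (A6), is justifying that the non-stochastic, non-zero MFMC bias is asymptotically negligible --- condition \eqref{eq:bias-condition}. Here this is simply assumed via (A6), so within the stated hypotheses the genuinely delicate point is the coupled two-timescale analysis: one must verify that, although $H(t)$ and $\theta(t)$ are updated on the \emph{same} timescale $a(t)$, the averaging nature of the $H$-recursion together with the Lipschitz continuity of $\theta\mapsto\nabla^2_\theta J^\theta(x_0)$ (a consequence of (A1), (A3) and compactness of $\Theta$) still lets $M(t)$ be treated as quasi-static from the viewpoint of the $\theta$-recursion, so that the ODE \eqref{eq:theta-second-ode} is the correct limiting flow. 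Establishing that this quasi-static coupling is legitimate --- or, alternatively, introducing a genuine timescale separation --- is the step I expect to require the most care.
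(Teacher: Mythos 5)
Your proposal follows essentially the same route as the paper's proof: invoke (A6) to pass to the bias-free recursion on the high-probability set $E^\eta$, use the Taylor-expansion correctness of the SPSA Hessian estimate (the paper's Lemma~\ref{lemma:spsa-n}, cited from \cite{Bhatnagar13SR}) to conclude that $M(t)$ tracks $\Upsilon(\nabla^2_\theta J^{\theta}(x_0))^{-1}$, and then treat the $\theta$-recursion as a discretization of the ODE~\eqref{eq:theta-second-ode} whose stable equilibria form $\Z$. The additional details you supply --- the explicit Lyapunov argument for the second-order ODE and the caveat that $H(t)$ and $\theta(t)$ share the single step size $a(t)$, so the quasi-static treatment of $M(t)$ needs justification --- are points the paper passes over silently by citing \cite{Bhatnagar13SR} for Lemma~\ref{lemma:spsa-n}(ii)--(iii), so your version is, if anything, slightly more careful along the same path.
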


Before we prove Theorem \ref{thm:mcpn-theta-convergence}, we establish that the Hessian estimate $H(t)$ in \eqref{eq:hessian-update-rule} converges almost surely to the true Hessian $\nabla^2_{\theta} J^\theta(x_0)$ in the following lemma.
\begin{lemma}
\label{lemma:spsa-n}
With $\delta \rightarrow 0$ as $t \rightarrow \infty$, for all $i, j \in \{1, \ldots, N \}$, we have the following claims with probability one:
\begin{enumerate}[\bfseries(i)]
\item $\left \| \dfrac{J^{\theta(t) + \delta \Delta(t) + \delta \widehat\Delta(t)}(x_0) - J^{\theta(t)+ \delta \Delta(t)}(x_0)}{\delta^2 \Delta_{i}(t) \widehat\Delta_{j}(t)} - \nabla^2_{i, j} J^{\theta(t)}(x_0) \right \| \rightarrow 0,
$\\[1ex]
\item $\left \| H_{i, j}(t) - \nabla^2_{i, j} J^{\theta(t)}(x_0) \right \| \rightarrow 0,
$\\[1ex]
\item $\left \| M(t) - \Upsilon(\nabla^2 J^{\theta(t)}(x_0))^{-1} \right \| \rightarrow 0.
$
\end{enumerate}
\end{lemma}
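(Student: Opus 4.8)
The plan is to prove the three claims in the order stated, since each builds on its predecessor. Claim~(i) is an asymptotic-correctness statement for the SPSA Hessian estimate \eqref{eq:hessianestimate} and is handled by a second-order Taylor expansion (which presumes $\theta\mapsto J^{\theta}(x_0)$ to be twice continuously differentiable on $\Theta$, a mild strengthening of (A3); combined with compactness of $\Theta$ this makes all Taylor remainders uniformly $o(\delta^{2})$), followed by the algebra of Rademacher perturbations, exactly as in the proof of Lemma~\ref{eq:spsa-correct}. Claim~(ii) is then obtained by viewing the recursion that defines $H_{i,j}(t)$ as a Robbins--Monro scheme and applying the ODE method of \cite{borkar2008stochastic}. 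Claim~(iii) is a deterministic consequence of (ii), obtained by pushing the convergence through the map $H\mapsto\Upsilon(H)^{-1}$.

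For (i): expanding $J^{\theta(t)+\delta\Delta(t)+\delta\widehat\Delta(t)}(x_0)$ and $J^{\theta(t)+\delta\Delta(t)}(x_0)$ about $\theta(t)$, subtracting, and using symmetry of the Hessian, the numerator of \eqref{eq:hessianestimate} equals
\[
\delta\,\widehat\Delta(t)^{\top}\nabla J^{\theta(t)}(x_0)\;+\;\delta^{2}\,\Delta(t)^{\top}\nabla^{2}J^{\theta(t)}(x_0)\,\widehat\Delta(t)\;+\;\frac{\delta^{2}}{2}\,\widehat\Delta(t)^{\top}\nabla^{2}J^{\theta(t)}(x_0)\,\widehat\Delta(t)\;+\;o(\delta^{2}).
\]
Dividing by $\delta^{2}\Delta_i(t)\widehat\Delta_j(t)$ and using that $\Delta(t)$ and $\widehat\Delta(t)$ are independent vectors of Rademacher variables --- so that $\Delta_i(t)^{-1}=\Delta_i(t)$, $\widehat\Delta_j(t)^{-1}=\widehat\Delta_j(t)$, $\mathbb E[\Delta_k(t)\Delta_i(t)]=\mathbb I_{\{k=i\}}$, $\mathbb E[\widehat\Delta_l(t)\widehat\Delta_j(t)]=\mathbb I_{\{l=j\}}$ and $\mathbb E[\Delta_i(t)]=0$ --- one verifies that the conditional expectation (given $\theta(t)$ and the past perturbations) of the estimate equals $\nabla^{2}_{i,j}J^{\theta(t)}(x_0)$ plus a term that vanishes as $\delta\to0$, while the leftover terms form a martingale-difference noise; the only component that is not uniformly bounded is $\delta^{-1}\widehat\Delta(t)^{\top}\nabla J^{\theta(t)}(x_0)\,\Delta_i(t)\widehat\Delta_j(t)$, which has zero conditional mean but magnitude $O(\delta^{-1})$.

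For (ii): the $H_{i,j}$-recursion then has the form $H_{i,j}(t+1)=H_{i,j}(t)+a(t)\big(\nabla^{2}_{i,j}J^{\theta(t)}(x_0)-H_{i,j}(t)+\beta(t)+\mathcal{M}(t)\big)$, with $\beta(t)$ a bias term satisfying $\beta(t)\to0$ (which also absorbs the MFMC bias, asymptotically negligible on the high-probability set $E^{\eta}$ by (A6)) and $\mathcal{M}(t)$ the martingale noise of Step~(i). Since $\theta(t)$ stays in the compact set $\Theta$ (by the projection $\Gamma$) and, treated as quasi-static, varies slowly relative to this averaging recursion, the ODE method yields that $H(t)$ tracks the ODE $\dot H=\nabla^{2}J^{\theta(t)}(x_0)-H$, whose unique globally exponentially stable equilibrium is $H=\nabla^{2}J^{\theta(t)}(x_0)$; this gives $\big\|H_{i,j}(t)-\nabla^{2}_{i,j}J^{\theta(t)}(x_0)\big\|\to0$ a.s.\ as $\delta\to0$, and symmetry of $H(t)$ holds by construction. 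For (iii), $\Upsilon(\cdot)$ maps into the cone of symmetric positive definite matrices and is continuous, and matrix inversion is locally Lipschitz on that cone; since $\theta\mapsto\nabla^{2}J^{\theta}(x_0)$ is continuous on the compact set $\Theta$, the family $\{\Upsilon(\nabla^{2}J^{\theta}(x_0)):\theta\in\Theta\}$ lies in a compact subset of the cone on which inversion is uniformly continuous. Combining this with (ii) gives $\big\|M(t)-\Upsilon(\nabla^{2}J^{\theta(t)}(x_0))^{-1}\big\|=\big\|\Upsilon(H(t))^{-1}-\Upsilon(\nabla^{2}J^{\theta(t)}(x_0))^{-1}\big\|\to0$ a.s.

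I expect the main obstacle to be the $O(\delta^{-1})$-scaled noise term isolated in Step~(i): although it is a zero-mean martingale difference, its variance grows like $\delta^{-2}$, so its accumulated effect in the $H$-recursion is controlled only when $\delta=\delta(t)\to0$ slowly enough relative to the step sizes --- concretely one needs a condition such as $\sum_t a(t)^{2}/\delta(t)^{2}<\infty$, which should be appended to (A5). A careful treatment must moreover (a) fix the $\delta(t)$ schedule consistently with the $O(\delta)$ gradient/Hessian biases, (b) handle the coupling between the $\theta$- and $H$-recursions --- which in a single-timescale setting as written here is delicate, and arguably requires putting the Hessian recursion on a faster timescale --- and (c) absorb the non-stochastic MFMC bias, for which we rely entirely on (A6). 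These are the genuinely non-routine points; the Taylor expansions and the continuity argument in (iii) are straightforward.
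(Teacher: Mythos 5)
Your proposal is essentially the argument the paper intends: the paper's own ``proof'' of Lemma~\ref{lemma:spsa-n} is a one-line deferral to Propositions~7.12 and Lemmas~7.10--7.11 of \cite{Bhatnagar13SR}, and what you have written out --- second-order Taylor expansion plus Rademacher algebra for (i), the ODE/averaging argument for the $H$-recursion in (ii), and continuity of $H\mapsto\Upsilon(H)^{-1}$ on a compact subset of the positive-definite cone for (iii) --- is exactly the content of those cited results, so there is no divergence in approach to report. The caveats you raise are genuine rather than defects of your write-up: the $O(\delta^{-1})$ zero-mean term $\delta^{-1}\widehat\Delta^{\top}\nabla J^{\theta}(x_0)\,\Delta_i\widehat\Delta_j$ does not vanish pathwise, so claim (i) as literally stated (without a conditional expectation) does not hold; it holds in conditional expectation, which is how the paper itself states the SF analogue in Lemma~\ref{lemma:sf-n} and how the cited reference states it. Likewise, the need for a vanishing schedule $\delta(t)$ with $\sum_t a(t)^2/\delta(t)^2<\infty$, and for the Hessian recursion to run on a faster timescale than the $\theta$-update, are conditions present in \cite{Bhatnagar13SR} that the paper elides (it uses a single step-size $a(t)$ and elsewhere calls $\delta$ a fixed constant). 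Your proof is therefore more careful than the paper's treatment, not less.
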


\begin{proof}
The above claims can be established by employing standard Taylor series expansions. For a detailed derivation, the reader is referred to Propositions 7.12 and Lemmas 7.10 and 7.11 of \citep{Bhatnagar13SR}, respectively.  
\end{proof}

%

\begin{proof}({\bf Theorem \ref{thm:mcpn-theta-convergence}})
 As in the case of the first order method, we can use (A6) to arrive at the following update rule equivalent of the policy parameter $\theta$ on the high-probability set $E^\eta$ :
  \begin{align}
\label{eq:hessian-update-rule1}
H_{i, j}(t + 1) &=  H_{i, j}(t) +  a(t) \bigg ( \dfrac{J^{\theta+\delta\Delta + \delta\widehat\Delta}(x_0) - J^{\theta+\delta\Delta}(x_0)}{\delta^2 \Delta_{j}(t) \widehat\Delta_{i}(t)} - H_{i, j}(t) \bigg ),\\
\theta_{i}(t+1)  &=  \bar\Gamma_i \bigg( \theta_{i}(t) + a(t)\sum\limits_{j = 1}^{N} M_{i, j}(t) \dfrac{J^{\theta-\delta\Delta}(x_0) - J^{\theta+\delta\Delta}(x_0)}{\delta \widehat\Delta_{j}(t)}
 \bigg),
\end{align}

In lieu of Lemma \ref{lemma:spsa-n}, it can be seen that $H_{i,j}(t)$ converges to the true Hessian $\nabla_{\theta_i}^2 J^{\theta}(x_0)$ as $\delta \rightarrow 0$. Thus, the $\theta$-recursion above is equivalent to the following on $E^\eta$:
\begin{align}
 \theta_{i}(t+1)  &=  \bar\Gamma_i \bigg( \theta_{i}(t) + a(t) (\nabla_{\theta_i}^2 J^{\theta}(x_0))^{-1} \nabla_{\theta_i} J^{\theta}(x_0)\bigg).
\end{align}
The above can be seen as a discretization of the ODE \eqref{eq:theta-second-ode}.
Thus, the $\theta(t)$ governed by \eqref{eq:hessian-update-rule} can be seen to converge to a set containing the asymptotically stable equilibria of the above ODE, albeit with probability $1-\eta$ for any $\eta>0$. 
\end{proof}

\subsection{Analysis of SF-based algorithms - MCPG-SF and MCPN-SF}
One can prove SF variants of Theorems \ref{thm:arxiv-spsa-theta-convergence} and \ref{thm:mcpn-theta-convergence} along similar lines, using the following lemma: Recall that $\Delta$ is a $N$-vector of independent Gaussian $\N(0,1)$ random variables for SF-based algorithms. 
\begin{lemma}
\label{lemma:sf-n}
With $\delta \rightarrow 0$ as $t \rightarrow \infty$, for all $i, j \in \{1, \ldots, N \}$, we have the following claims with probability one: (The expectations in the following are w.r.t. the distribution of perturbation random variables $\Delta$)
\begin{enumerate}[\bfseries(i)]
\item $\left \| \E\left[\dfrac{\Delta_i}{\delta} 
\left( J^{\theta+\delta\Delta}(x_0) -  J^{\theta-\delta\Delta}(x_0) \right )\right] - \nabla_{i} J^{\theta}(x_0) \right \| \rightarrow 0$,
\item $\left \| E \left[\dfrac{1}{\delta^2}
\bar{H}(\Delta)(J^{\theta + \delta \Delta}(x_0) + J^{\theta - \delta \Delta}(x_0))\right] - \nabla^2_{i, j} J^{\theta}(x_0) \right \| \rightarrow 0,
$\\[1ex]
\end{enumerate}
\end{lemma}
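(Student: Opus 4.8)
(sketch).}
The plan is to establish both claims by a Taylor expansion of $J^{\theta\pm\delta\Delta}(x_0)$ in $\delta$ about $\theta$, followed by taking expectations over the Gaussian perturbation $\Delta$, using the standard moment identities $\E[\Delta_i]=0$, $\E[\Delta_i\Delta_j]=\mathbf{1}_{\{i=j\}}$, $\E[\Delta_i\Delta_j\Delta_k]=0$, and the Isserlis (Wick) identity $\E[\Delta_i\Delta_j\Delta_k\Delta_l]=\mathbf{1}_{\{i=j\}}\mathbf{1}_{\{k=l\}}+\mathbf{1}_{\{i=k\}}\mathbf{1}_{\{j=l\}}+\mathbf{1}_{\{i=l\}}\mathbf{1}_{\{j=k\}}$. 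Under (A3), and because $\theta$ evolves in the compact set $\Theta$, $J^\theta(x_0)$ is (at least) thrice continuously differentiable in $\theta$ with derivatives bounded uniformly on $\Theta$, which is exactly what is needed to bound the Taylor remainders.

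For claim (i), I would write $J^{\theta+\delta\Delta}(x_0)-J^{\theta-\delta\Delta}(x_0)=2\delta\,\Delta^{T}\nabla J^{\theta}(x_0)+O(\delta^{3})$, where the second-order term drops out because the estimate is symmetric in $\pm\delta\Delta$ and the cubic remainder is bounded by $C\delta^{3}\|\Delta\|^{3}$ uniformly in $\theta$. Premultiplying by $\Delta_i/\delta$ and taking expectations, the leading term contributes $\sum_{j}\E[\Delta_i\Delta_j]\nabla_j J^\theta(x_0)=\nabla_i J^\theta(x_0)$ (up to the normalizing constant of \eqref{eq:sf-grad-estimate}), while the remainder contributes $\E\!\big[\Delta_i\cdot O(\delta^{2}\|\Delta\|^{3})\big]=O(\delta^{2})$ since all Gaussian moments are finite. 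Letting $\delta\to0$ gives claim (i).

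For claim (ii), I would add the two evaluations: $J^{\theta+\delta\Delta}(x_0)+J^{\theta-\delta\Delta}(x_0)=2J^\theta(x_0)+\delta^{2}\,\Delta^{T}\nabla^2 J^\theta(x_0)\Delta+O(\delta^{4})$, the odd-order terms again cancelling by symmetry. Premultiplying by $\delta^{-2}\bar H(\Delta)$ and taking expectations, the potentially divergent term $2\delta^{-2}J^\theta(x_0)\,\E[\bar H(\Delta)]$ vanishes \emph{exactly}, because $\bar H(\Delta)$ is constructed — via the $-1$ correction on the diagonal — precisely so that $\E[\bar H(\Delta)]=0$; this cancellation is the crux of the SF Hessian estimator. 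The surviving term is $\E\big[\bar H(\Delta)\,(\Delta^{T}\nabla^2 J^\theta(x_0)\Delta)\big]$, whose $(i,j)$ entry equals $\sum_{k,l}\E\big[(\Delta_i\Delta_j-\mathbf{1}_{\{i=j\}})\Delta_k\Delta_l\big]\nabla^2_{k,l} J^\theta(x_0)$; by the Isserlis identity this collapses to $\nabla^2_{i,j} J^\theta(x_0)+\nabla^2_{j,i} J^\theta(x_0)$, i.e.\ to $\nabla^2_{i,j} J^\theta(x_0)$ after using the symmetry of the Hessian and the normalization in \eqref{eq:sf-hessian-estimate}. The $O(\delta^{4})$ remainder, after division by $\delta^{2}$ and multiplication by $\bar H(\Delta)$, is $O(\delta^{2})$ in expectation, again by finiteness of Gaussian moments; letting $\delta\to0$ yields claim (ii).

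The main obstacle is not the moment bookkeeping but the uniform control of the Taylor remainder — showing $\E[\delta^{-2}\bar H(\Delta)\,R(\delta,\Delta)]\to 0$ where $R$ is the higher-order remainder of $J$ at $\theta$. This needs (a) that $J^\theta(x_0)$ admit enough bounded derivatives on a neighborhood of $\Theta$, so that $|R(\delta,\Delta)|\le C\delta^{3}\|\Delta\|^{3}$ with $C$ independent of $\theta$, and (b) dealing with the event $\{\theta+\delta\Delta\notin\Theta\}$, which has positive probability for a Gaussian $\Delta$ — handled, as usual, either by extending $J^\theta(x_0)$ smoothly off $\Theta$ or by noting that only small $\delta$ matters for the ODE limit. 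These points go through exactly as in the corresponding Smoothed Functional results of \cite{Bhatnagar13SR}, and the passage from the one-step estimators above to the running estimates $H(t),M(t)$ — via the same averaging/continuity argument used for \eqref{eq:hessian-update-rule} — then transfers the conclusion, giving the SF analogues of Theorems~\ref{thm:arxiv-spsa-theta-convergence} and \ref{thm:mcpn-theta-convergence}.
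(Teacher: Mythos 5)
Your argument is correct and is essentially the one the paper relies on: the paper's own ``proof'' of this lemma is a one-line citation to Propositions 6.5 and 8.10 of \cite{Bhatnagar13SR}, and those are established by exactly the Taylor-expansion-plus-Gaussian-moment computation you carry out, with the two key ingredients being $\E[\bar H(\Delta)]=0$ (which kills the $O(\delta^{-2})$ term) and the Isserlis identity. One point you correctly sense but should state explicitly: the moment bookkeeping yields $2\nabla_i J^{\theta}(x_0)$ and $\nabla^2_{i,j}J^{\theta}(x_0)+\nabla^2_{j,i}J^{\theta}(x_0)=2\nabla^2_{i,j}J^{\theta}(x_0)$ for the estimators exactly as normalized in \eqref{eq:sf-grad-estimate} and \eqref{eq:sf-hessian-estimate}, so a factor $1/2$ (present in the update rule \eqref{eq:sf-update-rule} but absent from the lemma statement) must be absorbed into the estimators for the claims to hold literally as written.
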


\begin{proof}
The proofs of the above claims follow from Propositions 6.5 and 8.10 of \cite{Bhatnagar13SR}, respectively.
\end{proof}  


\bibliography{all}

\begin{thebibliography}{22}
\providecommand{\natexlab}[1]{#1}
\providecommand{\url}[1]{\texttt{#1}}
\expandafter\ifx\csname urlstyle\endcsname\relax
  \providecommand{\doi}[1]{doi: #1}\else
  \providecommand{\doi}{doi: \begingroup \urlstyle{rm}\Url}\fi

\bibitem[Bartlett and Baxter(2001)]{Bartlett2001}
P.~L. Bartlett and J.~Baxter.
\newblock Infinite-horizon policy-gradient estimation.
\newblock \emph{Journal of Artificial Intelligence Research}, 15:\penalty0
  319--350, 2001.

\bibitem[Bertsekas and Tsitsiklis(1996)]{BertsekasT96}
D.~P. Bertsekas and J.~N. Tsitsiklis.
\newblock \emph{Neuro-Dynamic Programming (Optimization and Neural Computation
  Series, 3)}.
\newblock {Athena Scientific}, May 1996.

\bibitem[Bhatnagar et~al.(2009)Bhatnagar, Sutton, Ghavamzadeh, and
  Lee]{bhatnagar2009natural}
S.~Bhatnagar, R.~Sutton, M.~Ghavamzadeh, and M.~Lee.
\newblock Natural actor-critic algorithms.
\newblock \emph{Automatica}, 45\penalty0 (11):\penalty0 2471--2482, 2009.

\bibitem[Bhatnagar et~al.(2013)Bhatnagar, H.L., and L.A.]{Bhatnagar13SR}
S.~Bhatnagar, Prasad H.L., and Prashanth L.A.
\newblock \emph{Stochastic Recursive Algorithms for Optimization}, volume 434.
\newblock Springer, 2013.

\bibitem[Borkar(2008)]{borkar2008stochastic}
V.S. Borkar.
\newblock \emph{Stochastic Approximation: a Dynamical Systems Viewpoint}.
\newblock Cambridge University Press, 2008.

\bibitem[Bradtke and Barto(1996)]{Bradtke1996}
S.J. Bradtke and A.G. Barto.
\newblock Linear least-squares algorithms for temporal difference learning.
\newblock \emph{Machine Learning}, 22:\penalty0 33--57, 1996.

\bibitem[Busoniu et~al.(2011)Busoniu, Ernst, De~Schutter, and
  Babuska]{Busoniu2011cross}
L.~Busoniu, D.~Ernst, B.~De~Schutter, and R.~Babuska.
\newblock Cross-entropy optimization of control policies with adaptive basis
  functions.
\newblock \emph{IEEE Transactions on Systems, Man, and Cybernetics, Part B:
  Cybernetics}, 41\penalty0 (1):\penalty0 196--209, 2011.

\bibitem[Ernst et~al.(2005)Ernst, Geurts, and Wehenkel]{Ernst2005}
D.~Ernst, P.~Geurts, and L.~Wehenkel.
\newblock Tree-based batch mode reinforcement learning.
\newblock \emph{Journal of Machine Learning Research}, 6:\penalty0 503--556,
  2005.

\bibitem[Fonteneau(2011)]{Fonteneau2011Thesis}
R.~Fonteneau.
\newblock Contributions to {B}atch {M}ode {R}einforcement {L}earning.
\newblock \emph{PhD Thesis, University of Li\`{e}ge}, 2011.

\bibitem[Fonteneau et~al.(2010)Fonteneau, Murphy, Wehenkel, and
  Ernst]{Fonteneau2010AISTATS}
R.~Fonteneau, S.A. Murphy, L.~Wehenkel, and D.~Ernst.
\newblock Model-free {M}onte {C}arlo--like policy evaluation.
\newblock In \emph{Proceedings of International Conference on Artificial
  Intelligence and Statistics}, pages 217--224, 2010.

\bibitem[Fonteneau et~al.(2013)Fonteneau, Murphy, Wehenkel, and
  Ernst]{Fonteneau2013ANOR}
R.~Fonteneau, S.A. Murphy, L.~Wehenkel, and D.~Ernst.
\newblock Batch mode reinforcement learning based on the synthesis of
  artificial trajectories.
\newblock \emph{Annals of Operations Research}, 208:\penalty0 383--416, 2013.

\bibitem[Gill et~al.(1981)Gill, Murray, and Wright]{gill1981practical}
P.E. Gill, W.~Murray, and M.H. Wright.
\newblock \emph{Practical Optimization}.
\newblock Academic press, 1981.

\bibitem[Grondman et~al.(2012)Grondman, Busoniu, Lopes, and
  Babuska]{Grondman2012}
I.~Grondman, L.~Busoniu, G.~AD Lopes, and R.~Babuska.
\newblock A survey of actor-critic reinforcement learning: Standard and natural
  policy gradients.
\newblock \emph{Systems, Man, and Cybernetics, Part C: Applications and
  Reviews, IEEE Transactions on}, 42\penalty0 (6):\penalty0 1291--1307, 2012.

\bibitem[Katkovnik and Kulchitsky(1972)]{katkovnik1972convergence}
V.~Katkovnik and Y.~Kulchitsky.
\newblock Convergence of a class of random search algorithms.
\newblock \emph{Automatic Remote Control}, 8:\penalty0 81--87, 1972.

\bibitem[Konda and Tsitsiklis(2003)]{konda2003onactor}
Vijay~R Konda and John~N Tsitsiklis.
\newblock On actor-critic algorithms.
\newblock \emph{SIAM journal on Control and Optimization}, 42\penalty0
  (4):\penalty0 1143--1166, 2003.

\bibitem[Kushner and Clark(1978)]{kushner-clark}
H.~J. Kushner and D.~S. Clark.
\newblock \emph{Stochastic Approximation Methods for Constrained and
  Unconstrained Systems}.
\newblock Springer-Verlag, 1978.
\newblock ISBN 0-387-90341-0.

\bibitem[Lagoudakis and Parr(2003)]{Lagoudakis2003}
M.G. Lagoudakis and R.~Parr.
\newblock Least-squares policy iteration.
\newblock \emph{Jounal of Machine Learning Research}, 4:\penalty0 1107--1149,
  2003.

\bibitem[Ormoneit and Sen(2002)]{Ormoneit2002}
D.~Ormoneit and S.~Sen.
\newblock {K}ernel-based reinforcement learning.
\newblock \emph{Machine Learning}, 49\penalty0 (2-3):\penalty0 161--178, 2002.

\bibitem[Prashanth and Ghavamzadeh(2013)]{prashanth2013actor}
L.A. Prashanth and M.~Ghavamzadeh.
\newblock {Actor-critic algorithms for risk-sensitive MDPs}.
\newblock In \emph{Advances in Neural Information Processing Systems (NIPS)},
  2013.

\bibitem[Riedmiller(2005)]{Riedmiller2005}
M.~Riedmiller.
\newblock Neural fitted {Q} iteration - first experiences with a data efficient
  neural reinforcement learning method.
\newblock In \emph{European Conference on Machine Learning}, pages 317--328,
  2005.

\bibitem[Schmidhuber and Zhao(1998)]{Schmidhuber1998}
J.~Schmidhuber and J.~Zhao.
\newblock Direct policy search and uncertain policy evaluation.
\newblock Technical report, In AAAI Spring Symposium on Search under Uncertain
  and Incomplete Information, Stanford Univ, 1998.

\bibitem[Spall(1992)]{spall92multivariate}
J.C. Spall.
\newblock {Multivariate stochastic approximation using a simultaneous
  perturbation gradient approximation}.
\newblock \emph{IEEE Transactions on Automatic Control}, 37\penalty0
  (3):\penalty0 332--341, 1992.
\newblock ISSN 0018-9286.

\end{thebibliography}
\bibliographystyle{plainnat}

\end{document}